\newtheorem{theorem}{Theorem}
\newtheorem*{theorem*}{Theorem}
\newtheorem{lemma}{Lemma}
\newtheorem*{lemmaA}{Lemma A}
\theoremstyle{definition}
\newtheorem*{definition*}{\sc Definition}
\newtheorem{example}{\bf Example}
\newtheorem{remark}{\bf Remark}
\newtheorem*{remark*}{\bf Remark}
\newtheorem*{example*}{\bf Example}
\newcommand{\loc}{{\rm loc}}
\newcommand{\Real}{{\rm Re}}
\newcommand{\dist}{\mbox{dist}}
\newcommand{\const}{{\rm const}}
\newcommand{\cl}{{\rm clos}}
\newcommand{\clos}{{\rm clos}}
\begin{document}

\title{Brownian motion with general drift}

\author{D.\,Kinzebulatov and Yu.\,A.\,Semenov} 

\address{Universit\'{e} Laval, D\'{e}partement de math\'{e}matiques et de statistique, pavillon Alexandre-Vachon 1045, av. de la M\'{e}decine, Qu\'{e}bec, PQ, G1V 0A6, Canada}

\email{damir.kinzebulatov@mat.ulaval.ca}

\thanks{\sf http://archimede.mat.ulaval.ca/pages/kinzebulatov}

\address{University of Toronto, Department of Mathematics, 40 St.\,George Str., Toronto, ON, M5S 2E4, Canada}

\email{semenov.yu.a@gmail.com}

\subjclass[2010]{60H10, 47D07 (primary), 35J75 (secondary)}

\keywords{Elliptic operators, Feller processes, stochastic differential equations}

\begin{abstract}
We construct and study the weak solution to stochastic differential equation $dX(t)=-b(X(t))dt+\sqrt{2}dW(t)$, $X_0=x$, for every $x \in \mathbb R^d$, $d \geq 3$, with $b$ in the class of weakly form-bounded vector fields, containing, as proper subclasses, a sub-critical class $[L^d+L^\infty]^d$, as well as critical classes such as weak $L^d$ class, Kato class, Campanato-Morrey class, Chang-Wilson-T.\,Wolff class. 
\end{abstract}

\maketitle

Let $\mathcal L^d$ be the Lebesgue measure on $\mathbb R^d$, $L^p=L^p(\mathbb R^d,\mathcal L^d)$ 
the standard (real) Lebesgue 
spaces, $C_b=C_b(\mathbb R^d)$ the space of bounded continuous functions endowed with the $\sup$-norm, 
$C_\infty \subset C_b$ the closed subspace of functions vanishing at infinity.
We denote by $\mathcal B(X,Y)$ the space of bounded linear operators between Banach spaces $X \rightarrow Y$, endowed with the operator norm $\|\cdot\|_{X \rightarrow Y}$;  $\mathcal B(X):=\mathcal B(X,X)$. Put $\|\cdot\|_{p \rightarrow q}:=\|\cdot\|_{L^p \rightarrow L^q}$.

\medskip

\textbf{1.~}Let $d \geq 3$, $b:\mathbb R^d \rightarrow \mathbb R^d$. The problem of existence and uniqueness of a  weak solution to the stochastic differential equation (SDE)
\begin{equation}
\label{sde0}
X(t) = x - \int_0^t b(X(s))ds + \sqrt{2}W(t), \quad t \geq 0, \quad x \in \mathbb R^d,
\end{equation}
with a locally unbounded vector field $b$, has been investigated by many authors. The first principal result is due to \cite{P}: if $b \in [L^p+L^\infty]^d$, $p>d$, then there exists a unique in law weak solution to \eqref{sde0}. By the results in \cite{CW}, 
a unique in law weak solution to \eqref{sde0}
exists for $b$ from the Kato class $\mathbf{K}^{d+1}_0$. 
(Recall that
a $b:\mathbb R^d \rightarrow \mathbb R^d$ belongs to the Kato class $\mathbf{K}^{d+1}_\delta$, $0<\delta<1$, if $|b| \in L^1_{\loc}$ and there exists $\lambda = \lambda_\delta > 0$ such that
$$
\| |b| (\lambda - \Delta)^{-\frac{1}{2}} \|_{1 \rightarrow 1} \leqslant \delta;
$$
and $\mathbf{K}^{d+1}_0:=\cap_{\delta>0}\mathbf{K}^{d+1}_\delta \; (\supsetneq [L^p+L^\infty]^d, p>d)$.)

\begin{definition*}
Fix $\delta \in ]0,1[.$ A $b: \mathbb{R}^d \rightarrow  \mathbb{R}^d$ belongs to $\mathbf{F}_\delta^{\scriptscriptstyle 1/2},$ the class of \textit{weakly} form-bounded vector fields, if 
$|b| \in L^1_{\loc}$ and there exists $\lambda = \lambda_\delta > 0$ such that
$$
\| |b|^\frac{1}{2} (\lambda - \Delta)^{-\frac{1}{4}} \|_{2 \rightarrow 2} \leq \sqrt{\delta}.
$$
\end{definition*}
The class $\mathbf{F}_\delta^{\scriptscriptstyle 1/2}$ contains, as proper subclasses, a sub-critical  class $ [L^d+L^\infty]^d$ ($\subsetneq \mathbf{F}^{\scriptscriptstyle 1/2}_0:=\cap_{\delta>0}\mathbf{F}_\delta^{\scriptscriptstyle 1/2}$), as well as critical classes such as the Kato class $\mathbf{K}^{d+1}_\delta$, the weak $L^d$ class, the Campanato-Morrey class, the Chang-Wilson-T.\,Wolff class, see \cite[sect.\,4]{KS2}.

\medskip

Set $m_d:=\pi^{\frac{1}{2}} (2e)^{-\frac{1}{2}} d^\frac{d}{2} (d-1)^{\frac{1-d}{2}}.$ Let $b \in \mathbf{F}^{\scriptscriptstyle 1/2}_\delta$ for some $\delta$ such that $m_d \delta < \frac{4(d-2)}{(d-1)^2}.$
 
Assume that $\{b_n\} \subset [L^\infty \cap C^1]^d \cap \mathbf{F}^{\scriptscriptstyle 1/2}_{\delta_1}$, $m_d \delta_1 < \frac{4(d-2)}{(d-1)^2}$, $b_n \rightarrow b$ strongly in $[L^1_\loc]^d$. Then \cite[Theorem 2]{Ki}, \cite[Theorem 4.4]{KS2}
\[
s \mbox{-} C_\infty \mbox{-}\lim_n e^{-t\Lambda_{C_\infty}(b_n)} 
\]
exists uniformly in $t\in [0,1],$ and hence determines a positivity preserving $L^\infty$ contraction $C_0$ semigroup $e^{-t\Lambda_{C_\infty}(b)}$ (Feller semigroup).

Here $\Lambda_{C_\infty}(b_n):=-\Delta + b_n\cdot\nabla$ of domain $(1-\Delta)^{-1}C_\infty(\mathbb R^d).$ 

 For instance, one can take
\begin{equation}
\label{b_n}
b_n:=\gamma_{\varepsilon_n} \ast \mathbf{1}_nb, \qquad n=1,2,\dots,
\end{equation}
where $\mathbf{1}_n$ is the indicator of $\{x \in \mathbb R^d:|x| \leq n, |b(x)| \leq n\}$ and
$\gamma_\varepsilon(x):=\frac{1}{\varepsilon^{d}}\gamma\left(\frac{x}{\varepsilon}\right)$ is the K.\,Friedrichs mollifier, i.e.\;$\gamma(x):=c \exp\big(\frac{1}{|x|^2-1}\big)\mathbf{1}_{|x|<1}$ with the constant $c$ adjusted to $\int_{\mathbb R^d} \gamma(x)dx=1$, for appropriate $\varepsilon_n \downarrow 0$.

\medskip

\textbf{2.\;}The space 
$D([0,\infty[,\bar{\mathbb R}^d)$ is defined to be the set of all right-continuous functions $X:[0,\infty[ \rightarrow 
\bar{\mathbb R}^d$ (here and elsewhere, $\bar{\mathbb{R}}^d:=\mathbb R^d \cup \{\infty\}$ is the one-point compactification of $\mathbb R^d$) having the left limits,  such that $X(t)=\infty$, $t>s$, whenever $X(s)=\infty$ or $X(s-)=\infty$.

By $\mathcal F_t \equiv \sigma\{X(s) \mid 0 \leq s\leq t, X \in D([0,\infty[,\bar{\mathbb R}^d)\}$ denote the minimal $\sigma$-algebra containing all cylindrical sets
$\{X \in D([0,\infty[,\bar{\mathbb R}^d)\mid \bigl(X(s_1),\dots,X(s_n)\bigr) \in A, A \subset (\bar{\mathbb R}^{d})^n \text{ is open}\}_{0 \leq s_1 \leq \dots \leq s_n \leq t}$;

By a classical result, for a given Feller semigroup $T^t$ on $C_\infty(\mathbb R^d)$, there exist probability measures $\{\mathbb P_x\}_{x \in \mathbb R^d}$ on $\mathcal F_\infty \equiv \sigma\{X(s) \mid 0 \leq s < \infty, X \in D([0,\infty[,\bar{\mathbb R}^d)\}$ such that $(D([0,\infty[,\bar{\mathbb R}^d), \mathcal F_t, \mathcal F_\infty, \mathbb P_x)$ is a Markov process (strong Markov after completing the filtration) and
$$
\mathbb E_{\mathbb P_x}[f(X(t))]=T^t f(x), \quad X \in D([0,\infty[,\bar{\mathbb R}^d), \quad f \in C_\infty, \quad x \in \mathbb R^d.
$$

The space $C([0,\infty[,\mathbb R^d)$ is defined to be the set of all continuous functions $X:[0,\infty[ \rightarrow \mathbb R^d$.

Set 
$\mathcal G_t:=\sigma\{X(s) \mid  0 \leq s \leq t, X \in C([0,\infty[,\mathbb R^d)\}$, $\mathcal G_\infty:=\sigma\{X(s) \mid  0 \leq s < \infty, X \in C([0,\infty[,\mathbb R^d)\}$.

\begin{theorem}[Main result]
\label{mainthm}
Let $d \geq 3$, $b \in \mathbf{F}^{\scriptscriptstyle 1/2}_\delta$, $m_d\delta<4\frac{d-2}{(d-1)^2}$.
Let $(D([0,\infty[,\bar{\mathbb R}^d), \mathcal F_t, \mathcal F_\infty, \mathbb P_x)$ be the Markov process
determined by $T^t=e^{-t\Lambda_{C_\infty}(b)}$. 
The following is true for every $x \in \mathbb R^d$:

\smallskip

{\rm(\textit{i})} The trajectories of the process are $\mathbb P_x$ a.s.\,finite and continuous on $0 \leq t <\infty$.

\smallskip

We denote $\mathbb P_x\upharpoonright (C([0,\infty[,\mathbb R^d),\mathcal G_\infty)$ again by $\mathbb P_x$.

\smallskip

{\rm(\textit{ii})}  $\mathbb E_{\mathbb P_x}\int_0^t |b(X(s))|ds<\infty$, $X \in C([0,\infty[,\mathbb R^d)$. 

\smallskip

{\rm(\textit{iii})}  
There exists a $d$-dimensional Brownian motion $W(t)$ on $(C([0,\infty[,\mathbb R^d),\mathcal G_t, \mathbb P_x)$ such that $\mathbb P_x$ a.s.
\begin{equation}
\label{sde1}
X(t) = x - \int_0^t b(X(s))ds + \sqrt{2}W(t), \quad t \geq 0,
\end{equation}
i.e.\,$\bigl((X(t),W(t)), (C([0,\infty[,\mathbb R^d),\mathcal G_t, \mathcal G_\infty, \mathbb P_x)\bigr)$ is a weak solution to the SDE \eqref{sde1}.
\end{theorem}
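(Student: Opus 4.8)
The plan is to obtain all three assertions by passing to the limit in the approximating diffusions. For each $n$, the smooth bounded drift $b_n$ from \eqref{b_n} yields, by classical It\^o theory, a conservative diffusion with continuous paths solving $X_n(t)=x-\int_0^t b_n(X_n(s))\,ds+\sqrt{2}W_n(t)$ on a stochastic basis, whose transition function is $e^{-t\Lambda_{C_\infty}(b_n)}$; denote its law on path space by $\mathbb P_x^n$. Since $e^{-t\Lambda_{C_\infty}(b_n)}\to T^t$ in the stated $s$-$C_\infty$ sense uniformly on $[0,1]$ (and hence on every compact time interval by the semigroup property), the finite-dimensional distributions of $\mathbb P_x^n$ converge to those of $\mathbb P_x$. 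The whole proof is the transfer of the path regularity and of the defining relation of $X_n$ across this convergence.

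The analytic core is a single a priori bound, uniform in $n$: for every $t>0$,
\[
\mathbb E_{\mathbb P_x^n}\int_0^t|b_n(X(s))|\,ds=\int_0^t\bigl(e^{-s\Lambda_{C_\infty}(b_n)}|b_n|\bigr)(x)\,ds\le C(t,x),
\]
together with its sharpening $\mathbb E_{\mathbb P_x^n}\int_0^t|b_n(X(s))-b(X(s))|\,ds\to 0$. After a Laplace transform in $t$ both reduce to norm estimates for $(\lambda+\Lambda_{C_\infty}(b_n))^{-1}$ applied to $|b_n|$ and to $|b_n-b|$, which I would control by expanding the resolvent around $(\lambda-\Delta)^{-1}$ and using that $|b_n|^{1/2}(\lambda-\Delta)^{-1/4}$ has $L^2$-norm at most $\sqrt{\delta_1}$. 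The quantitative threshold $m_d\delta<4\frac{d-2}{(d-1)^2}$ is precisely what keeps the relevant operator norms below $1$, so that the Neumann series converges in the appropriate (weighted) spaces. Letting $n\to\infty$ with the help of the convergence of finite-dimensional distributions and of $b_n\to b$ in $[L^1_{\loc}]^d$ then yields assertion (ii).

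For (i), finiteness of the trajectories is the conservativeness $T^t\mathbf 1=\mathbf 1$, which I would deduce from the $L^\infty$-contraction and positivity of $T^t$ together with the uniform bound above, no probability mass reaching $\infty$ in finite time because the expected $L^1$ cost of the drift along the paths stays finite. Continuity then follows by upgrading convergence of finite-dimensional distributions to tightness in $C([0,\infty[,\mathbb R^d)$: in $X_n(t)=x-\int_0^t b_n\,ds+\sqrt2 W_n(t)$ the drift part is of bounded variation with uniformly controlled expected total variation by the bound above, while the martingale part is a Brownian motion, so a Kolmogorov--Chentsov modulus estimate yields uniform tightness of $\{\mathbb P_x^n\}$ on the space of continuous paths; the limit, whose finite-dimensional marginals coincide with $\mathbb P_x$, is therefore carried by $C([0,\infty[,\mathbb R^d)$.

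Finally, (iii) is the identification of the equation. Setting $W_n(t):=\tfrac{1}{\sqrt2}\bigl(X_n(t)-x+\int_0^t b_n(X_n(s))\,ds\bigr)$, each $W_n$ is a $\mathbb P_x^n$-Brownian motion; by the a priori estimate the drift integrals converge in $L^1(\mathbb P_x)$ along the approximation, so $W_n\to W$ with $W(t)=\tfrac{1}{\sqrt2}\bigl(X(t)-x+\int_0^t b(X(s))\,ds\bigr)$, whence \eqref{sde1} holds by construction. To see that $W$ is Brownian under $\mathbb P_x$, note it is continuous and $\mathcal G_t$-adapted, and verify that it is a martingale with $\langle W^i,W^j\rangle_t=\delta_{ij}t$ by applying the Dynkin martingale identity for $f(X(t))-\int_0^t(-\Lambda(b)f)(X(s))\,ds$, valid for $f$ in the domain of the generator, to cutoffs of $y\mapsto y^i$ and $y\mapsto y^iy^j$ and removing the cutoffs with (i)--(ii); L\'evy's characterization then identifies $W$ as a $d$-dimensional Brownian motion. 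I expect the main obstacle to be exactly the uniform resolvent/integral estimate above for the nonsymmetric singular operator $\Lambda(b)=-\Delta+b\cdot\nabla$, since it is the only place where the merely form-bounded, critical nature of $b$ must be reconciled with pointwise control of the drift along the trajectories.
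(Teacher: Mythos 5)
Your skeleton---approximate by the smooth $b_n$ of \eqref{b_n}, control $\mathbb E\int_0^t|b_n(X(s))|\,ds$ through resolvent estimates, establish the martingale problem for coordinate functions and their products, and conclude by L\'evy's characterization---is the same as the paper's. But two of your steps have genuine gaps. The critical one is your proof of continuity in (i). A uniform bound on the expected total variation, $\sup_n\mathbb E_{\mathbb P_x^n}\int_0^T|b_n(X(s))|\,ds\le C$, gives neither a Kolmogorov--Chentsov modulus nor tightness in $C([0,\infty[,\mathbb R^d)$: it does not prevent the drift integral from accumulating its mass on arbitrarily short time intervals, so the increments $\int_s^t b_n(X_n(r))\,dr$ admit no uniform bound of the form $\mathbb E|\cdot|^{a}\le C|t-s|^{1+\epsilon}$, and an $L^1$-in-time bound is perfectly compatible with jumps appearing in the limit (continuous paths of uniformly bounded variation can converge in law, in the Skorokhod sense, to a discontinuous path, as steep deterministic ramps already show). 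The paper does not use tightness at all: it starts from the Feller process on $D([0,\infty[,\bar{\mathbb R}^d)$ supplied by classical theory, proves that $M^g(t)=g(X(t))-g(x)+\int_0^t(-\Delta g+b\cdot\nabla g)(X(s))\,ds$ is a $\mathbb P_x$-martingale for $g\in C_c^\infty$ (Lemma \ref{Y_prop}, via \eqref{j_2}, \eqref{rem_j3} and \eqref{conv_c}), and then rules out jumps by showing $\mathbb E_x\bigl[\sum_{s\le t}\mathbf 1_A(X(s-))\mathbf 1_B(X(s))\bigr]=0$ for disjoint closed $A,B$, using the auxiliary martingale $K^g(t)=\int_0^t\mathbf 1_A(X(s-))\,dM^g(s)$ (Lemma \ref{cont_prop}); non-explosion is a separate argument (Lemma \ref{finite_prop}) that genuinely requires the weighted estimate \eqref{j_1_w}, not just the conservativeness heuristic you invoke.

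The second gap is in (iii). First, ``$W_n\to W$ in $L^1(\mathbb P_x)$'' is not meaningful as stated: $W_n$ is defined under $\mathbb P_x^n$ and $W$ under $\mathbb P_x$, so there is no common space on which to take the limit; the repair is to define $W(t):=\tfrac1{\sqrt2}\bigl(X(t)-x+\int_0^t b(X(s))\,ds\bigr)$ directly on $(C([0,\infty[,\mathbb R^d),\mathcal G_t,\mathbb P_x)$---legitimate by (ii)---and verify L\'evy's conditions there, which is what the paper does. Not repairable from (i)--(ii) alone, however, is your ``removing the cutoffs'': for $f(y)=y_iy_j$ this requires $\mathbb E_x[|X(t)|^2]<\infty$ and $\mathbb E_x\int_0^t (|b|\,|\nabla f|)(X(s))\,ds<\infty$ with $|\nabla f|$ growing linearly, i.e.\ moment bounds that do not follow from finiteness of $\mathbb E_x\int_0^t|b(X(s))|\,ds$. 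The unweighted estimate \eqref{j_2} cannot produce them, since it needs compactly supported $h$. This is precisely why the paper proves the weighted estimates \eqref{j_1_w}, \eqref{j_2_w} of Lemma A, with weight $\rho=(1+l|y|^2)^{-\nu}$, $\nu>\frac d{2p}+1$, calibrated so that $\rho\cdot(1+|y|^2)\in L^p$: these yield steps $(\mathbf a)$--$(\mathbf c)$ of Lemma \ref{thm2}, which are the real content behind (ii) and behind the cutoff removal. Your proposal identifies the right threshold and the right Neumann-series mechanism, but applies it only to bounded (compactly supported) test functions; the quadratic-growth weighted estimates are the missing ingredient.
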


\begin{remark}
\label{rem_uniq}
One can show, using the methods of this paper, that if $\{\mathbb Q_x\}_{x \in \mathbb R^d}$ is another weak solution to \eqref{sde1} such that
$$
\mathbb Q_x=w{\mbox-}\lim_n \mathbb P_x(\tilde{b}_n) \quad \text{for every $x \in \mathbb R^d$},
$$
where $\{\tilde{b}_n\} \subset \mathbf{F}^{\scriptscriptstyle 1/2}_{\delta_1},$ $m_d\delta_1<4 \frac{d-2}{(d-1)^2},$ then $\{\mathbb Q_x\}_{x \in \mathbb R^d}=\{\mathbb P_x\}_{x \in \mathbb R^d}.$  
\end{remark}

Theorem \ref{mainthm} covers critical-order singularities of $b$, as the following example shows.

\begin{example}
Consider the vector field ($d \geq 3$) $$b(x):=  c |x|^{-2} x, \qquad c>0,$$
then $b \in \mathbf{F}^{\scriptscriptstyle 1/2}_\delta$, $c=\frac{d-2}{2}\sqrt{\delta}.$ 

1) If $c<2m_d^{-1}(d-2)^2(d-1)^{-2},$ then by Theorem \ref{mainthm}, the SDE
\[
X(t) =  - \int_0^t b(X(s))ds + \sqrt{2} W(t), \quad t \geq 0.
\]
has a weak solution. (For this particular vector field the result is, in fact, stronger, see Remark \ref{rem2} below.)

2) If $c \geq d$, then the SDE doesn't have a weak solution. 

Indeed, following \cite[Example 1.17]{CE}, suppose by contradiction that there is a weak solution to the SDE if $c \geq d$, i.e.~there are a continuous process $X(t)$ and a Brownian motion $W(t)$ on a probability space $(\Upsilon,\mathcal  F_t,\mathbb Q)$ such that $\int_0^t |b(X(s))|ds<\infty$ and the SDE holds $\mathbb Q$ a.s.
Then $X(t)=(X_1(t),\dots,X_d(t))$ is a continuous semimartingale with cross-variation $[X_i,X_k]_t=2\delta_{ik}t$.
By It\^{o}'s formula,
$$
|X(t)|^2= - 2\int_0^t X(s)\cdot b(X(s))ds + 2\sqrt{2}\int_0^t X(s)dW(s) + 2\int_0^t d[W,W]_s,
$$
i.e.~
$$
|X(t)|^2=-2  c \int_0^t \mathbf{1}_{X(s)\neq 0} ds + 2\sqrt{2}\int_0^t X(s)dW(s) + 2 t d.
$$
If we accept that $\int_0^t \mathbf{1}_{X(s)=0}ds=0$ a.s., then, clearly, 
$$
|X(t)|^2=2(d-c)  \int_0^t \mathbf{1}_{X(s)\neq 0} ds + 2\sqrt{2}\int_0^t X(s)dW(s) \quad \text{a.s.}
$$
Therefore, $|X(t)|^2 \geq 0$ is a local supermartingale if $c > d$ and is a local martingale if $c = d$.
Then a.s. $|X(0)|=0 \Rightarrow X(t)=0$, which contradicts to  $[X_1,X_1]_t=2t$.

It remains to prove that $\int_0^t \mathbf{1}_{X(s)=0}ds=0$ a.s. It suffices to show that 
$\int_0^t \mathbf{1}_{X_1(s)=0}ds=0$ a.s. Since $X_1(t)$ is a continuous semimartingale, $[X_1,X_1]_t=2t$, by the occupation times formula 
$
 \int_0^t \mathbf{1}_{X_1(s)=0} d [X_1,X_1]_s = \int_{-\infty}^\infty \mathbf{1}_{a=0}L_{X_1}^a(t)da = 0$ $\text{ a.s.},
$
where $L_{X_1}^a(t)$ is the local time of $X_1$ at $a$ on $[0,t]$.\qed
\end{example}

\begin{remark}

\label{rem2}
Recall the following

\begin{definition*}
A $b:\mathbb R^d \rightarrow \mathbb R^d$ belongs to $\mathbf{F}_\delta$, the class of form-bounded vector fields, if 
$|b| \in L^2_{\loc}$ and there exists $\lambda = \lambda_\delta > 0$ such that 
$$
\| |b| (\lambda - \Delta )^{-\frac{1}{2}} \|_{2 \rightarrow 2} \leq \sqrt{\delta}. 
$$
\end{definition*}
Note that
$\mathbf{F}_{\delta_1} \subsetneq \mathbf{F}_\delta^{\scriptscriptstyle 1/2}$
for  
$\delta = \sqrt{\delta_1}.$

\smallskip

For $b \in \mathbf{F}_{\delta_1}$, the constraint $m_d\sqrt{\delta_1}<4\frac{d-2}{(d-1)^2}$ in Theorem \ref{mainthm} can be relaxed to $\delta_1<1 \wedge \big(\frac{2}{d-2}\big)^2$. The proof of Theorem \ref{mainthm} extends to such $b$  after replacing Lemma A
below by evident modifications of \cite[Lemma 5]{KS}, \cite[Theorem 3.7]{KS2}. 

For $b(x):=c|x|^{-2}x \in \mathbf{F}_{\delta_1}$, $\delta_1:=c^2\frac{4}{(d-2)^2}$, the result is even stronger: $-1<c<\frac{1}{2}$ if $d=3$, $-\infty<c<1$ if $d=4$, $-\infty<c<(d-3)/2$ if $d \geq 5$
(after replacing Lemma A by evident modifications of Theorems 3.8, 3.9 in \cite{KS2}).

We refer to \cite{KS2} for a more detailed discussion on classes $\mathbf{F}_{\delta_1}$, $\mathbf{F}_\delta^{\scriptscriptstyle 1/2}$.
\end{remark}

\section{Preliminaries}

\label{sect_1}

Denote by $C^{0,\alpha}=C^{0,\alpha}(\mathbb R^d)$ the space of H\"{o}lder continuous functions ($0<\alpha<1$), $\mathcal S$ the L.~Schwartz space of test functions, $W^{k,p}=W^{k,p}(\mathbb R^d,\mathcal L^d)$ ($k=1,2$)
the standard 
Sobolev spaces, $\mathcal W^{\alpha,p}$, $\alpha>0$, the Bessel potential space endowed with norm $\|u\|_{p,\alpha}:=\|g\|_p$,  
$u=(1-\Delta)^{-\frac{\alpha}{2}}g$, $g \in L^p$, and $\mathcal W^{-\alpha,p'}$, $p'=p/(p-1)$, the anti-dual of $\mathcal W^{\alpha,p}$.

\smallskip

The proof of Theorem \ref{mainthm} is based on the following analytic results \cite[Theorems 1, 2]{Ki}, \cite[Theorems 4.3, 4.4]{KS2}.

Set $$I_s:=\bigl]\frac{2}{1+\sqrt{1-m_d\delta}},\frac{2}{1-\sqrt{1-m_d\delta}} \bigr[.$$
For every $p \in I_s$, there exists a holomorphic semigroup $e^{-t\Lambda_p(b)}$ on $L^p$ such that the resolvent set of $-\Lambda_p(b)$ contains the half-plane $\mathcal O:=\{\zeta \in \mathbb C:\Real \zeta \geq \kappa_d\lambda\}$,
\begin{align}
\label{theta_repr}
(\zeta+\Lambda_p(b))^{-1}= (\zeta - \Delta)^{-1} - (\zeta - \Delta)^{-\frac{1}{2}-\frac{1}{2q}} Q_{p}(q) (1 + T_p)^{-1} G_{p}(r) (\zeta - \Delta)^{-\frac{1}{2r'}}, \quad \zeta \in \mathcal O,
\end{align}
where $1 \leq r<p<q$, $\kappa_d:=\frac{d}{d-1}$, $Q_p(q), G_p(r), T_p \in \mathcal B(L^p)$, 
\begin{align*}
G_{p}(r):= b^\frac{1}{p} \cdot \nabla (\zeta - \Delta)^{-\frac{1}{2} - \frac{1}{2r}},  \quad b^\frac{1}{p}:=|b|^{\frac{1}{p}-1}b,
\end{align*}
$Q_p(q)$, $T(p)$ are the extensions by continuity of densely defined on $\mathcal{E} :=  \bigcup_{\epsilon >0} e^{-\epsilon|b|} L^p$ operators
\begin{align*}
Q_p(q) \upharpoonright \mathcal E: = (\zeta -\Delta)^{- \frac{1}{2q'}} |b|^{\frac{1}{p'}}, \quad
T_p \upharpoonright \mathcal E:= b^\frac{1}{p} \cdot \nabla (\zeta -\Delta)^{-1} |b|^\frac{1}{p^\prime},
\end{align*}
$$
\|T_p\|_{p \rightarrow p} \leq m_dc_p\delta, \quad  c_p:= \frac{pp'}{4}, \quad m_dc_p\delta<1 \;\; (\Leftrightarrow p \in I_s).
$$
\begin{equation*}
e^{-t\Lambda_p(b)}=s \mbox{-}L^p \mbox{-} \lim_n e^{-t\Lambda_p(b_n)} \quad \text{(uniformly on every compact interval of $t \geq 0$) },
\end{equation*} 
where $b_n$'s are given by \eqref{b_n}, $\Lambda_p(b_n) := -\Delta + b_n \cdot \nabla,$  $D(\Lambda_p(b_n)) = W^{2,p}$. 

By \eqref{theta_repr}, $$(\zeta + \Lambda_p(b))^{-1} \in \mathcal B(\mathcal W^{-\frac{1}{r'},p}, \mathcal W^{1+\frac{1}{q},p}).$$

Fix numbers $p \in I_s$, $p>d-1$ \footnote{Since $m_d\delta<4\frac{d-2}{(d-1)^2}$, such $p$ exists.} and $q$ sufficiently close to $p$. By \eqref{theta_repr} and the Sobolev Embedding Theorem, $(\zeta+\Lambda_{p}(b))^{-1}[L^p] \subset C^{0,\alpha}$, $\alpha<1-\frac{d-1}{p}$. 
Define $\Lambda_{C_\infty}(b)$ by
$$
(\mu+\Lambda_{C_\infty}(b))^{-1}:=\bigl((\mu+\Lambda_{p}(b))^{-1} \upharpoonright L^p \cap C_\infty \bigr)^{\clos}_{C_\infty \rightarrow C_\infty}, \quad \mu \geq \kappa_d\lambda.
$$
Then
\begin{equation}
\label{SF}
 \bigl(e^{-t\Lambda_{C_\infty}(b)} \upharpoonright L^p \cap C_\infty \bigr)^{\cl}_{L^p \rightarrow C_\infty} \in \mathcal B(L^p,C_\infty), \qquad p \in \bigl]d-1,\frac{2}{1-\sqrt{1-m_d\delta}} \bigr[, \; t>0.
\end{equation}
\begin{equation}
\label{conv_c}
e^{-t\Lambda_{C_\infty}(b)}=s \mbox{-}C_\infty \mbox{-} \lim_n e^{-t\Lambda_{C_\infty}(b_n)} \quad \text{(uniformly on every compact interval of $t \geq 0$)},
\end{equation}
where $D(\Lambda_{C_\infty}(b_n))=(1-\Delta)^{-1}C_\infty$.


The following estimates are direct consequences of \eqref{theta_repr}: There exist constants $C_i=C_i(\delta,p)$, $i=1,2$, such that, for all $h \in C_c$ and $\mu \geq \kappa_d\lambda_\delta,$
\begin{equation}
\label{j_2}
\bigl\|(\mu+\Lambda_{C_\infty}(b))^{-1}|b_m| h \bigr\|_{\infty} \leq C_1\||b_m|^{\frac{1}{p}}h\|_p,  
\end{equation}
\begin{equation}
\label{rem_j3}
\|(\mu+\Lambda_{C_\infty}(b))^{-1}|b_m-b_n|h\|_\infty  \leq C_2 \bigl\| |b_m-b_n|^{\frac{1}{p}} h\bigr\|_p.
\end{equation}

\bigskip

Our proof of Theorem \ref{mainthm} employs also the following weighted estimates.
Set $$\rho(y)\equiv\rho_l (y):=(1+l |y|^2)^{-\nu},   \;\;\; \nu > \frac{d}{2p}+1, \;\; l>0, \;\; y \in \mathbb R^d.$$

\begin{lemmaA}
\label{prop_apr_w_}
Fix $p \in I_s$, $p>d-1$. There exist constants $K_i=K_i(\delta,p)$, $i=1,2$ such that, for all $h \in C_c(\mathbb R^d)$, $\mu \geq \kappa_d\lambda_\delta$ and all sufficiently small $l=l(\delta,p)>0$,
\begin{equation}
\label{j_1_w}
\tag{$E_1$}
\bigl\|\rho (\mu+\Lambda_{C_\infty}(b_n))^{-1} h \bigr\|_{\infty} \leq K_1\|\rho h\|_p,
\end{equation}
\begin{equation}
\label{j_2_w}
\tag{$E_2$}
\bigl\|\rho (\mu+\Lambda_{C_\infty}(b_n))^{-1}|b_m| h \bigr\|_{\infty} \leq K_2\| |b_m|^\frac{1}{p} \rho h\|_p. 
\end{equation}
\end{lemmaA}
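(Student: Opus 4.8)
The plan is to obtain $(E_1)$ and $(E_2)$ as weighted counterparts of the unweighted bounds \eqref{j_2}, \eqref{rem_j3}, by conjugating the resolvent representation \eqref{theta_repr} (written for each $b_n$, with $\zeta=\mu$) by the weight $\rho$ and showing that every factor is stable under this conjugation. The driving mechanism is that $\rho=\rho_l$ varies slowly: a direct computation gives the pointwise bounds
$$
|\nabla\rho| \le \nu\sqrt{l}\,\rho, \qquad |\Delta\rho| \le C(d,\nu)\, l\,\rho,
$$
so that each correction produced by moving $\rho$ across a factor of \eqref{theta_repr} carries a factor $\sqrt{l}$ and can be absorbed once $l=l(\delta,p)$ is taken small. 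The restriction $\nu>\frac{d}{2p}+1$ is precisely what makes the relevant weighted Bessel-potential integrals converge, i.e.\ what turns these formal corrections into genuinely bounded operators on $L^p$.

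Because $\rho$ commutes with all multiplication operators, and using $b_n^{\frac1p}:=|b_n|^{\frac1p-1}b_n$, conjugation of the singular factor reads
$$
\rho\, T_p\,\rho^{-1} = b_n^{\frac1p}\cdot\bigl(\rho\,\nabla(\mu-\Delta)^{-1}\rho^{-1}\bigr)\,|b_n|^{\frac1{p'}}.
$$
Commuting $\rho$ through $(\mu-\Delta)^{-1}$ via
$$
[\rho,(\mu-\Delta)^{-1}] = -(\mu-\Delta)^{-1}\bigl((\Delta\rho)+2\nabla\rho\cdot\nabla\bigr)(\mu-\Delta)^{-1}
$$
and the bounds above shows $\rho\,\nabla(\mu-\Delta)^{-1}\rho^{-1}=\nabla(\mu-\Delta)^{-1}$ plus a correction of order $\sqrt{l}$ in the relevant operator norm; hence
$$
\|\rho\, T_p\,\rho^{-1}\|_{p\rightarrow p} \le \|T_p\|_{p\rightarrow p}+O(\sqrt{l}) \le m_d c_p\delta_1+O(\sqrt{l})<1
$$
for $l$ small, so $(1+\rho T_p\rho^{-1})^{-1}=\rho(1+T_p)^{-1}\rho^{-1}$ exists with norm uniform in $n$ and in small $l$. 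The same conjugation applied to the multipliers $Q_p(q)$, $G_p(r)$ and to the external Bessel factors $(\mu-\Delta)^{-s}$ keeps each of them bounded (on $L^p$, resp.\ between the relevant Bessel spaces), again up to $O(\sqrt{l})$ corrections.

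Conjugating \eqref{theta_repr} term by term then yields $\rho(\mu+\Lambda_p(b_n))^{-1}\rho^{-1}\in\mathcal B\bigl(\mathcal W^{-\frac{1}{r'},p},\mathcal W^{1+\frac{1}{q},p}\bigr)$ with a bound uniform in $n$ and in small $l$. Since $p>d-1$, the Sobolev Embedding Theorem gives $\mathcal W^{1+\frac{1}{q},p}\hookrightarrow C^{0,\alpha}\subset C_\infty$, and (as in the passage from $\Lambda_p$ to $\Lambda_{C_\infty}$) the conjugated $C_\infty$-resolvent coincides with the $L^p$-one on the relevant class. Applying the conjugated operator to $\rho h$ and using $\rho(\mu+\Lambda_{C_\infty}(b_n))^{-1}h=[\rho(\mu+\Lambda(b_n))^{-1}\rho^{-1}](\rho h)$ produces \eqref{j_1_w}. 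For \eqref{j_2_w} we proceed exactly as for \eqref{j_2}: write $|b_m|=|b_m|^{\frac1{p'}}|b_m|^{\frac1p}$, note $\rho|b_m|=|b_m|\rho$, and absorb $|b_m|^{\frac1{p'}}$ into (the conjugate of) a $Q_p$-type factor — legitimate and uniform because every $b_m\in\mathbf F^{\scriptscriptstyle 1/2}_{\delta_1}$ with the same $\delta_1$ — leaving $\||b_m|^{\frac1p}\rho h\|_p$ on the right.

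The main obstacle is the rigorous control of the conjugated \emph{non-local} operators: one must show that the commutators of $\rho$ with the Bessel potentials $(\mu-\Delta)^{-s}$ and with the singular operators $T_p$, $G_p(r)$ are not merely formally $O(\sqrt{l})$ but define bounded operators on $L^p$ with that small norm, and that they do not spoil the $L^p$-boundedness of the singular pieces inherited from \eqref{theta_repr}. This is exactly where both the slow variation $|\nabla\rho|\le\nu\sqrt{l}\,\rho$ and the decay guaranteed by $\nu>\frac{d}{2p}+1$ are indispensable. Once these weighted commutator estimates are secured, the smallness of $l$ closes all bounds, and uniformity in $n,m$ is automatic since $\delta_1$ and the constants $m_d,c_p$ are common to the whole sequence $\{b_n\}$.
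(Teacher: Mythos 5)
Your proposal follows essentially the same route as the paper's appendix proof: the same slow-variation bounds $|\nabla\rho|\le \nu\sqrt{l}\,\rho$, $|\Delta\rho|\le C l\rho$, the same commutator identity for $\rho$ with $(\mu-\Delta)^{-1}$, absorption of the $O(\sqrt{l})$ corrections for small $l=l(\delta,p)$, the key weighted bound $\|\rho\,T_p(b_n)h\|_p\le(1+\varepsilon)m_dc_p\delta\|\rho h\|_p<\|\rho h\|_p$ so that $(1+T_p)^{-1}$ survives conjugation, and finally the representation \eqref{theta_repr} together with $p>d-1$ to pass from $L^p$ to $L^\infty$. The ``main obstacle'' you flag --- upgrading the formal $O(\sqrt{l})$ commutator corrections to honest operator bounds --- is handled in the paper exactly as you anticipate, via a self-referential system of weighted quantities ($I_0,\dots,I_3$, each estimated in terms of the others with coefficients $O(\sqrt{l})$ or $O(l)$) that is closed by absorption once $l$ is small, with uniformity in $n,m$ coming from the common $\delta_1$.
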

This technical lemma is proven in the appendix.

\medskip

\section{Proof of Theorem \ref{mainthm}}


\begin{lemma}
\label{ae_rem}
For every $x \in \mathbb R^d$ and $t>0$, $b_n(X(t)) \rightarrow b(X(t))$ $\mathbb P_x$ a.s. as $n \uparrow \infty$.
\end{lemma}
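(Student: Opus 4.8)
The plan is to prove convergence along the trajectory by first upgrading the $[L^1_\loc]^d$ convergence of $b_n \to b$ to pointwise (a.e.) convergence along a subsequence, and then transferring this Lebesgue-a.e.\ statement into a $\mathbb P_x$-a.s.\ statement via an absolute-continuity property of the one-dimensional marginals of the process. The key analytic input is that the transition function of the limiting process maps $L^p$ densities into $C_\infty$, which is exactly the smoothing estimate \eqref{SF}: for $t>0$ the operator $e^{-t\Lambda_{C_\infty}(b)}$ extends to a bounded operator $L^p \to C_\infty$. Dually, this says the law of $X(t)$ under $\mathbb P_x$ has a density with respect to Lebesgue measure, so a $\mathcal L^d$-null set is also a null set for the distribution of $X(t)$.

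First I would fix $x$ and $t>0$ and recall that since $b_n \to b$ in $[L^1_\loc]^d$, one may pass to a subsequence (not relabeled) along which $b_n(y) \to b(y)$ for $\mathcal L^d$-a.e.\ $y \in \mathbb R^d$; denote by $N \subset \mathbb R^d$ the exceptional Lebesgue-null set where convergence fails. Next I would show $\mathbb P_x(X(t) \in N)=0$. For this, write $\mathbb P_x(X(t)\in N)=\mathbb E_{\mathbb P_x}[\mathbf 1_N(X(t))]=\bigl(e^{-t\Lambda_{C_\infty}(b)}\mathbf 1_N\bigr)(x)$ in the sense afforded by the identity $\mathbb E_{\mathbb P_x}[f(X(t))]=T^tf(x)$; approximating $\mathbf 1_N$ from above by a decreasing sequence in $L^p\cap C_\infty$ and invoking \eqref{SF}, the value is controlled by $\|\mathbf 1_N\|_p=0$, whence $\mathbb P_x(X(t)\in N)=0$. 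This is the crux: the density estimate \eqref{SF} is precisely what lets a Lebesgue-null set be seen as null by the process's law. Consequently $X(t) \notin N$ holds $\mathbb P_x$-a.s., and therefore $b_n(X(t)) \to b(X(t))$ $\mathbb P_x$-a.s.\ along the chosen subsequence. To obtain convergence along the full sequence $\{b_n\}$ (rather than merely a subsequence), I would note that the construction \eqref{b_n} is fixed and run the subsequence argument against an arbitrary further subsequence: every subsequence of $\{b_n\}$ admits a further sub-subsequence converging $\mathcal L^d$-a.e., hence $\mathbb P_x$-a.s.\ along that sub-subsequence by the above, and since the a.e.-limit is forced to be $b(X(t))$, the standard subsequence principle gives $b_n(X(t)) \to b(X(t))$ $\mathbb P_x$-a.s.\ along the whole sequence.

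The main obstacle is making the identity $\mathbb E_{\mathbb P_x}[\mathbf 1_N(X(t))]=\bigl(e^{-t\Lambda_{C_\infty}(b)}\mathbf 1_N\bigr)(x)$ rigorous for the indicator of a null set, since the Feller semigroup $T^t$ is a priori defined only on $C_\infty$, whereas $\mathbf 1_N$ is neither continuous nor of compact support; here one must carefully pass from continuous test functions to $\mathbf 1_N$ using monotone approximation together with the $L^p \to C_\infty$ bound \eqref{SF} and positivity preservation of the semigroup, being attentive to the distinction between the $C_\infty$-realization and the $L^p$-realization of $e^{-t\Lambda_{C_\infty}(b)}$. A clean way to finesse this is to establish directly that the sub-probability measure $A \mapsto \mathbb P_x(X(t)\in A)$ is absolutely continuous with respect to $\mathcal L^d$: for any bounded Borel $f\geq 0$ supported where $\mathbf 1_N$ is, approximate and use $\bigl|T^t f(x)\bigr| \leq \|e^{-t\Lambda_{C_\infty}(b)}\|_{p\to\infty}\,\|f\|_p$, which vanishes as $\|f\|_p \to 0$. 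Once this absolute continuity is in hand, the rest is routine.
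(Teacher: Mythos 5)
Your first two steps reproduce the paper's argument: by \eqref{SF} (together with positivity and a monotone/dominated approximation of indicators) the law of $X(t)$ under $\mathbb P_x$ is absolutely continuous with respect to $\mathcal L^d$, so every Lebesgue-null set is null for the distribution of $X(t)$. That part is correct and is indeed the crux of the paper's own proof.

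The gap is in your final step. The ``standard subsequence principle'' (every subsequence has a further subsequence converging to the same limit $\Rightarrow$ the whole sequence converges) is valid only for modes of convergence induced by a topology or metric, such as convergence in probability; it is \emph{false} for almost sure convergence. From $b_n \to b$ in $[L^1_{\loc}]^d$ alone you can extract a.e.\ convergent subsequences, but the exceptional null set depends on the subsequence, and the full sequence $b_n(y)$ can fail to converge for $y$ in a set of positive (even full) measure --- typewriter-type examples show this. Running your argument over all subsequences therefore only yields $b_n(X(t)) \to b(X(t))$ in $\mathbb P_x$-probability, not $\mathbb P_x$-a.s., which is what the lemma asserts and what is actually used later (e.g.\ the Fatou argument in step $(\mathbf a)$ of Lemma \ref{Y_prop} needs an a.s.\ statement). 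The paper avoids this issue by exploiting the specific approximation \eqref{b_n}: with the cutoff and the mollifier, and with $\varepsilon_n \downarrow 0$ chosen appropriately, $b_n \to b$ pointwise $\mathcal L^d$-a.e.\ along the \emph{whole} sequence --- for instance, $\mathbf{1}_n b \to b$ at every point where $b$ is finite, and choosing $\varepsilon_n$ so that $\|\gamma_{\varepsilon_n} \ast \mathbf{1}_n b - \mathbf{1}_n b\|_{L^1(\{|x|\leq n\})} \leq 2^{-n}$ forces the mollification error to vanish a.e.\ by a Borel--Cantelli argument. With that single fixed null set $N$ in hand, your absolute-continuity step finishes the proof; so the repair is to replace the subsequence principle by the pointwise a.e.\ convergence of the full sequence, which holds by construction.
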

\begin{proof}
By \eqref{SF} and the Dominated Convergence Theorem, for any $\mathcal L^d$-measure zero set $G \subset \mathbb R^d$ and every $t>0$, $\mathbb P_x[X(t) \in G]=0$. 
Since $b_n \rightarrow b$ pointwise in $\mathbb R^d$ outside of an $\mathcal L^d$-measure zero set, we have the required.
\end{proof}

Let $\mathbb P^n_x$  be the probability measures associated with $e^{-t\Lambda_{C_\infty}(b_n)}$, $n=1,2,\dots$
Set $\mathbb E_x:=\mathbb E_{\mathbb P_x}$, and $\mathbb E^n_x:=\mathbb E_{\mathbb P_x^n}$. 

Fix a $\upsilon \in C^\infty([0,\infty[)$, $\upsilon (s)=1$ if $0 \leq s \leq 1$, $\upsilon (s)=0$ if $s \geq 2$.  
Set
\begin{equation}
\label{xi_k}
\xi_k(y):=\left\{
\begin{array}{ll}
\upsilon (|y|+1-k) & |y| \geq k, \\
1 & |y|<k.
\end{array}
\right.
\end{equation}

\begin{lemma}
\label{finite_prop} 
For every $x \in \mathbb R^d$ and $t>0$, $\mathbb P_x[X(t)=\infty]=0$.
\end{lemma}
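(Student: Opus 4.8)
The goal is to show that for every $x \in \mathbb{R}^d$ and $t > 0$, the process started at $x$ is at the cemetery point $\infty$ with probability zero at time $t$. The natural plan is to approximate the indicator of $\{X(t) = \infty\}$ by the cutoff functions $\xi_k$ introduced in \eqref{xi_k}, which vanish near infinity, and to control $\mathbb{E}_x[1 - \xi_k(X(t))]$ uniformly as $k \to \infty$. Since $\mathbb{P}_x[X(t) = \infty] = \mathbb{E}_x \mathbf{1}_{\{X(t) = \infty\}}$ and $\xi_k(\infty) = 0$ with $\xi_k \uparrow 1$ on $\mathbb{R}^d$, it suffices to prove that $\mathbb{E}_x[\xi_k(X(t))] \to 1$ as $k \to \infty$, equivalently that $\mathbb{E}_x[(1-\xi_k)(X(t))] \to 0$.

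First I would pass to the approximating processes. Because $1 - \xi_k \in C_\infty(\mathbb{R}^d)$ (it is supported in $\{|y| \geq k\}$ and is continuous, vanishing at infinity), the representation $\mathbb{E}_x^n[(1-\xi_k)(X(t))] = \bigl(e^{-t\Lambda_{C_\infty}(b_n)}(1-\xi_k)\bigr)(x)$ holds, and by \eqref{conv_c} we have $\mathbb{E}_x[(1-\xi_k)(X(t))] = \lim_n \mathbb{E}_x^n[(1-\xi_k)(X(t))]$. So it is enough to bound $\bigl(e^{-t\Lambda_{C_\infty}(b_n)}(1-\xi_k)\bigr)(x)$ uniformly in $n$ and show it tends to $0$ as $k \to \infty$. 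The key analytic input should be the weighted estimate. The idea is that $1 - \xi_k$ is concentrated at spatial infinity, so applying the resolvent (and hence, via the Laplace transform, the semigroup) to it produces something small at any fixed $x$, precisely because the weighted bound $(E_1)$ in Lemma A controls $\rho\,(\mu + \Lambda_{C_\infty}(b_n))^{-1} h$ by $\|\rho h\|_p$ with the weight $\rho = \rho_l$ decaying at infinity.

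Concretely, I would write the semigroup action on $1-\xi_k$ through the resolvent using the inverse Laplace transform / representation $e^{-t\Lambda_{C_\infty}(b_n)} = \frac{1}{2\pi i}\int_\Gamma e^{t\zeta}(\zeta + \Lambda_{C_\infty}(b_n))^{-1}\,d\zeta$ over a contour $\Gamma$ in the resolvent set, then insert the weight: at a fixed point $x$, $\rho(x)^{-1}$ is a finite constant, so $|\bigl(e^{-t\Lambda_{C_\infty}(b_n)}(1-\xi_k)\bigr)(x)| \leq \rho(x)^{-1}\|\rho\,e^{-t\Lambda_{C_\infty}(b_n)}(1-\xi_k)\|_\infty$, and the latter $\sup$-norm is estimated by $(E_1)$ in terms of $\|\rho\,(1-\xi_k)\|_p$. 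Since $1 - \xi_k$ is supported in $\{|y| \geq k\}$ and bounded by $1$, while $\rho(y) = (1 + l|y|^2)^{-\nu}$ with $\nu > \frac{d}{2p} + 1$ is $p$-integrable against the measure on that tail region, $\|\rho\,(1-\xi_k)\|_p \to 0$ as $k \to \infty$ by dominated convergence. This gives the uniform-in-$n$ smallness and closes the argument.

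The main obstacle I anticipate is making the passage from the weighted resolvent estimate $(E_1)$ to a genuine weighted bound on the semigroup rigorous and uniform in $n$: one must justify the contour representation for $e^{-t\Lambda_{C_\infty}(b_n)}$, confirm that the weight $\rho$ can be commuted past the integral and that the resulting constants depend only on $\delta$ and $p$ (not on $n$), and verify that the bound holds uniformly for $t$ in the relevant range. A cleaner route, avoiding explicit contour integrals, is to work directly at the resolvent level: bound $\mathbb{E}_x^n \int_0^\infty e^{-\mu t}(1-\xi_k)(X(t))\,dt = \bigl((\mu + \Lambda_{C_\infty}(b_n))^{-1}(1-\xi_k)\bigr)(x)$ via $(E_1)$, obtaining smallness of the resolvent applied to $1-\xi_k$ uniformly in $n$, and then transfer this to a statement at fixed $t$ using positivity preservation and the monotonicity $\xi_k \uparrow 1$ together with the fact that $t \mapsto \mathbb{E}_x[(1-\xi_k)(X(t))]$ is controlled by its Laplace transform. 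Either way, the crux is that the weighted estimate converts "mass escaping to infinity" into an $L^p$-norm against a decaying weight, which vanishes as the cutoff radius $k \to \infty$.
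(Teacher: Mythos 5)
Your core mechanism --- convert ``mass escaping to infinity'' into $\|\rho(1-\xi_k)\|_p \to 0$ via the weighted estimate \eqref{j_1_w}, uniformly in $n$ --- is exactly the paper's, and your ``cleaner route'' at the resolvent level is the route the paper actually takes. However, two of your steps have genuine problems. First, your parenthetical claim that $1-\xi_k \in C_\infty(\mathbb R^d)$ is false: by \eqref{xi_k}, $\xi_k$ is compactly supported (it vanishes for $|y| \geq k+1$), so $1-\xi_k$ equals $1$ near infinity and does \emph{not} vanish there. Hence neither $e^{-t\Lambda_{C_\infty}(b_n)}(1-\xi_k)$ nor $(\mu+\Lambda_{C_\infty}(b_n))^{-1}(1-\xi_k)$ is defined as you write it; the paper repairs precisely this point by inserting a second cutoff, working with $\xi_r(1-\xi_k) \in C_c$, and letting $r \uparrow \infty$ by dominated convergence. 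Relatedly, your first route through a contour representation of the semigroup cannot be run with the tools at hand: Lemma A provides the weighted bound only for real $\mu \geq \kappa_d\lambda_\delta$, not on a complex contour, so the uniform-in-$n$ weighted semigroup bound you would need is simply not available.

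Second, and more seriously, the passage from the Laplace-transform statement to the statement at a fixed time $t$ is not justified in your proposal. You appeal to ``the fact that $t \mapsto \mathbb E_x[(1-\xi_k)(X(t))]$ is controlled by its Laplace transform,'' but no such fact holds: a nonnegative bounded function is not pointwise controlled by its Laplace transform, so smallness of $\int_0^\infty e^{-\mu t}\mathbb E_x[(1-\xi_k)(X(t))]\,dt$ says nothing about the integrand at a fixed $t_0$ without monotonicity or regularity in $t$. The missing idea is that $t \mapsto \mathbb P_x[X(t)=\infty]$ is non-decreasing, which holds by the very construction of the path space $D([0,\infty[,\bar{\mathbb R}^d)$: a trajectory that reaches $\infty$ stays at $\infty$. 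Granting this, since $(1-\xi_k)(\infty)=1$ one has $\mathbb E_x[(1-\xi_k)(X(t))] \geq \mathbb P_x[X(t)=\infty]$, whence
\begin{equation*}
\int_0^\infty e^{-\mu t}\,\mathbb E_x[(1-\xi_k)(X(t))]\,dt \;\geq\; \int_{t_0}^\infty e^{-\mu t}\,\mathbb P_x[X(t)=\infty]\,dt \;\geq\; \mu^{-1}e^{-\mu t_0}\,\mathbb P_x[X(t_0)=\infty],
\end{equation*}
and letting $k \uparrow \infty$ (the left side tends to $0$ by the resolvent estimate, after passing from $\mathbb E^n_x$ to $\mathbb E_x$ via \eqref{conv_c}) forces $\mathbb P_x[X(t_0)=\infty]=0$. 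This absorbing-cemetery monotonicity is exactly how the paper closes the argument; without it, your proof does not conclude.
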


\begin{proof}
First, let us show that for every $\mu \geq \kappa_d\lambda_\delta$, 
\begin{equation}
\label{conv_n9}
\int_0^\infty e^{-\mu t}\mathbb E^n_x[\xi_k(X(t))]dt \rightarrow \frac{1}{\mu} \quad \text{ as $k \uparrow \infty$ \textit{uniformly in} $n$}.
\end{equation}
Since $\int_0^\infty e^{-\mu t}\mathbb E^n_x[\mathbf{1}_{\mathbb R^d}(X(t))]dt=\frac{1}{\mu}$, \eqref{conv_n9} is equivalent to
$\int_0^\infty e^{-\mu t}\mathbb E^n_x[(\mathbf{1}_{\mathbb R^d}-\xi_k)(X(t))]dt  \rightarrow 0$ as $k \uparrow \infty$ uniformly in $n$. 
We have
\begin{align*}
& \int_0^\infty e^{-\mu t}\mathbb E^n_x[(\mathbf{1}_{\mathbb R^d}-\xi_k)(X(t))]dt \\
& \text{(we use the Dominated Convergence Theorem)} \\
& = \lim_{r \uparrow \infty}\int_0^\infty e^{-\mu t}\mathbb E^n_x[\xi_r(1-\xi_k)(X(t))]dt \\
& = \lim_{r \uparrow \infty}(\mu+\Lambda_{C_\infty}(b_n))^{-1}[\xi_r(1-\xi_k)](x) \\
& \text{(we apply crucially \eqref{j_1_w})} \\
& \leq \rho(x)^{-1} K_1\lim_{r \uparrow \infty} \|\rho\xi_r (1-\xi_k)\|_p \leq \rho(x)^{-1} K_1\|\rho (1-\xi_k) \|_p \rightarrow 0 \quad \text{as $k \uparrow \infty$},
\end{align*}
which yields \eqref{conv_n9}.

Now, since  $\mathbb E_x[\xi_k(X(t))]=\lim_{n} \mathbb E^n_x[\xi_k(X(t))]$ uniformly on every compact interval of $t \geq 0$, see \eqref{conv_c}, it follows from \eqref{conv_n9} that $$\int_0^\infty e^{-\mu t}\mathbb E_x[\xi_k(X(t))]dt \rightarrow \frac{1}{\mu} \quad \text{ as }k \uparrow \infty.$$

Finally, suppose that  $\mathbb P_x[X(t)=\infty]$ is strictly positive for some $t>0$.
By the construction of $\mathbb P_x$, $t \mapsto \mathbb P_x[X(t)=\infty]$ is non-decreasing, and so
$\varkappa:=\int_0^\infty e^{-\mu t}\mathbb E_x[\mathbf{1}_{X(t)=\infty}]dt>0$. Now,
$$
\frac{1}{\mu}=\int_0^\infty e^{-\mu t}\mathbb E_x[\mathbf{1}_{\bar{\mathbb R}^d}(X(t))]dt \geq \varkappa + \int_0^\infty e^{-\mu t}\mathbb E_x[\xi_k(X(t))]dt.
$$
Selecting $k$ sufficiently large, we arrive at contradiction. 
\end{proof}

The space $D([0,\infty[,\mathbb R^d)$ is defined to be the subspace of $D([0,\infty[,\bar{\mathbb R}^d)$ consisting of the trajectories $X(t) \neq \infty$, $0 \leq t <\infty$. 
Let $\mathcal F'_{t}:=\sigma(X(s) \mid 0 \leq s \leq t, X \in D([0,\infty[,\mathbb R^d))$, $\mathcal F'_{\infty}:=\sigma(X(s) \mid 0 \leq s <\infty, X \in D([0,\infty[,\mathbb R^d))$.

By Lemma \ref{finite_prop}, $(D([0,\infty[,\mathbb R^d),\mathcal  F_\infty')$ has full $\mathbb P_x$-measure in $(D([0,\infty[,\bar{\mathbb R}^d),\mathcal  F_\infty)$.
We denote the restriction of $\mathbb P_x$ from $(D([0,\infty[,\bar{\mathbb R}^d),\mathcal  F_\infty)$ to $(D([0,\infty[,\mathbb R^d),\mathcal  F_\infty')$ again by $\mathbb P_x$.

\begin{lemma}
\label{Y_prop}
For every $x \in \mathbb R^d$ and $g \in C_c^\infty(\mathbb R^d)$, 
$$
g(X(t)) - g(x) + \int_0^t (-\Delta g + b\cdot\nabla g)(X(s))ds, $$
is a martingale relative to $(D([0,\infty[,\mathbb R^d),\mathcal F'_t, \mathbb P_x)$.

\end{lemma}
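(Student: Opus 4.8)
The plan is to obtain the martingale property by approximation, exploiting that for the smooth bounded drifts $b_n$ it is classical. Abbreviate $\varphi:=\nabla g\in[C_c^\infty]^d$, $K:=\spt g$, and write $D_ng:=-\Delta g+b_n\cdot\nabla g$, $Dg:=-\Delta g+b\cdot\nabla g$. Since $\mathcal F'_s$ is generated by the coordinate maps $X(r)$, $0\le r\le s$, a monotone class argument reduces the assertion to proving, for every $0\le s\le t$, every $0\le s_1\le\dots\le s_k\le s$ and all $f_1,\dots,f_k\in C_b$, that with $F:=\prod_{j=1}^k f_j(X(s_j))$ one has
\begin{equation*}
\mathbb E_x\Big[\big(g(X(t))-g(X(s))+\textstyle\int_s^t Dg(X(r))\,dr\big)F\Big]=0.
\end{equation*}
Here $M_t:=g(X(t))-g(x)+\int_0^tDg(X(r))\,dr$ is $\mathcal F'_t$-adapted, and it is integrable because $\mathbb E_x|g(X(t))|\le\|g\|_\infty$, $\mathbb E_x\int_0^t|\Delta g|(X(r))\,dr\le t\|\Delta g\|_\infty$, while $\mathbb E_x\int_0^t|b|\mathbf 1_K(X(r))\,dr<\infty$ follows from \eqref{j_2} together with $b_m\to b$ in $L^1(K)$. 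For fixed $n$ the operator $\Lambda_{C_\infty}(b_n)=-\Delta+b_n\cdot\nabla$ generates the Feller semigroup $e^{-t\Lambda_{C_\infty}(b_n)}$ and $g\in C_c^\infty\subset(1-\Delta)^{-1}C_\infty=D(\Lambda_{C_\infty}(b_n))$, so Dynkin's formula gives that $g(X(t))-g(x)+\int_0^tD_ng(X(r))\,dr$ is a $\mathbb P_x^n$-martingale; testing against $F$,
\begin{equation*}
\mathbb E_x^n\Big[\big(g(X(t))-g(X(s))+\textstyle\int_s^t D_ng(X(r))\,dr\big)F\Big]=0.\tag{$\ast$}
\end{equation*}

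Next I would pass to the limit $n\uparrow\infty$ in $(\ast)$. As $g,\Delta g,f_1,\dots,f_k\in C_\infty$ and $e^{-t\Lambda_{C_\infty}(b_n)}\to e^{-t\Lambda_{C_\infty}(b)}$ strongly in $C_\infty$ uniformly on compact $t$-intervals by \eqref{conv_c}, the finite-dimensional distributions of $\mathbb P_x^n$ converge to those of $\mathbb P_x$: writing each expectation below as an iterated application of the semigroups and of the (bounded) multiplication operators $\psi\mapsto f_j\psi$, a short induction gives $\mathbb E_x^n[(g(X(t))-g(X(s)))F]\to\mathbb E_x[(g(X(t))-g(X(s)))F]$, and by Fubini and dominated convergence (the integrand is bounded, $\Delta g\in C_c$) also $\mathbb E_x^n[\int_s^t\Delta g(X(r))\,dr\,F]\to\mathbb E_x[\int_s^t\Delta g(X(r))\,dr\,F]$. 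Thus $(\ast)$ forces the limit $\lim_n\mathbb E_x^n[\int_s^t(b_n\cdot\varphi)(X(r))\,dr\,F]$ to exist, and the whole statement reduces to the single identification
\begin{equation*}
\lim_n\mathbb E_x^n\Big[\textstyle\int_s^t(b_n\cdot\varphi)(X(r))\,dr\,F\Big]=\mathbb E_x\Big[\textstyle\int_s^t(b\cdot\varphi)(X(r))\,dr\,F\Big].\tag{$\star$}
\end{equation*}

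To prove $(\star)$ I would interpose a fixed regularization $b_m$ and split the left-hand expectation into $\mathbb E_x^n[\int_s^t(b_n-b_m)\cdot\varphi\,dr\,F]+\mathbb E_x^n[\int_s^t b_m\cdot\varphi\,dr\,F]$. For fixed $m$ the second term converges, as $n\uparrow\infty$, to $\mathbb E_x[\int_s^t b_m\cdot\varphi\,dr\,F]$ (again finite-dimensional convergence and dominated convergence, since $b_m\cdot\varphi\in C_c$), and $\mathbb E_x[\int_s^t b_m\cdot\varphi\,dr\,F]\to\mathbb E_x[\int_s^t b\cdot\varphi\,dr\,F]$ as $m\uparrow\infty$: bounding by the Laplace transform $\int_0^t\mathbb E_x[h(X(r))]\,dr\le e^{\mu t}(\mu+\Lambda_{C_\infty}(b))^{-1}h(x)$ ($h\ge0$) and invoking \eqref{rem_j3} with $b_n\to b$ gives $\mathbb E_x\int_s^t|b_m-b|\mathbf 1_K(X(r))\,dr\le e^{\mu t}C_2\||b_m-b|^{1/p}\mathbf 1_K\|_p\to0$, as $\int_K|b_m-b|\to0$. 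There remains the first term, which I would control uniformly in $n$: since $\|F\|_\infty<\infty$,
\begin{equation*}
\Big|\mathbb E_x^n\big[\textstyle\int_s^t(b_n-b_m)\cdot\varphi\,dr\,F\big]\Big|\le\|F\|_\infty\|\varphi\|_\infty\int_s^t\mathbb E_x^n\big[|b_n-b_m|\mathbf 1_K(X(r))\big]\,dr,
\end{equation*}
and, passing again to the Laplace transform, $\int_s^t\mathbb E_x^n[|b_n-b_m|\mathbf 1_K(X(r))]\,dr\le e^{\mu t}\rho(x)^{-1}\|\rho(\mu+\Lambda_{C_\infty}(b_n))^{-1}|b_n-b_m|\mathbf 1_K\|_\infty$.

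The main obstacle is exactly this last bound: it must be uniform in the index $n$ (which also labels the measure $\mathbb P_x^n$) and yet tend to $0$ as $m\uparrow\infty$, so the unweighted estimates \eqref{j_2}–\eqref{rem_j3}, available only for the limiting operator $\Lambda_{C_\infty}(b)$, do not suffice. Here I would invoke the difference analog of the weighted estimate $(E_2)$ of Lemma A, i.e.\ \eqref{j_2_w} with $|b_m|$ replaced by $|b_n-b_m|$: the proof of Lemma A applies verbatim because $b_n-b_m\in\mathbf{F}^{\scriptscriptstyle 1/2}_{4\delta_1}$ uniformly in $n,m$ (from $|b_n-b_m|^{\frac12}\le|b_n|^{\frac12}+|b_m|^{\frac12}$ and the triangle inequality for the defining norm of $\mathbf{F}^{\scriptscriptstyle 1/2}_{\delta_1}$), so only the multiplier's form-bound — not the operator's, which stays $\delta_1$ — is affected and the resulting constant is uniform in $n,m$. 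This yields a bound $\le e^{\mu t}\rho(x)^{-1}K\,\||b_n-b_m|^{1/p}\mathbf 1_K\rho\|_p$ with $\||b_n-b_m|^{1/p}\mathbf 1_K\rho\|_p^p\le\int_K|b_n-b_m|$; letting $n\uparrow\infty$ first (so $\int_K|b_n-b_m|\to\int_K|b-b_m|$) and then $m\uparrow\infty$ sends the first term to $0$, completing $(\star)$ and hence the proof.
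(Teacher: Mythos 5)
Your proof is correct, but it takes a genuinely different route from the paper's. The paper never tests against cylinder functions: it exploits the Markov structure, namely that an additive functional $g(X(t))-g(x)+\int_0^t F(X(s))\,ds$ of a Markov family is a martingale as soon as its expectation vanishes identically in $(x,t)$, so the whole lemma reduces to convergence of the scalar functions $x\mapsto\mathbb E^n_x[g(X(t))]$ and $x \mapsto \mathbb E^n_x\int_0^t(-\Delta g+b_n\cdot\nabla g)(X(s))\,ds$ to their counterparts for $(b,\mathbb P_x)$; no monotone class argument or finite-dimensional distributions are needed. Your key step $(\star)$ appears there as step $(\mathbf c)$, written as $\int_0^t\bigl(e^{-s\Lambda_{C_\infty}(b)}-e^{-s\Lambda_{C_\infty}(b_n)}\bigr)(b_n\cdot\nabla g)(x)\,ds$ and handled by the same interposition of a fixed $b_m$ that you use, with the error terms controlled by \eqref{rem_j3} applied to \emph{both} resolvents $(\mu+\Lambda_{C_\infty}(b))^{-1}$ and $(\mu+\Lambda_{C_\infty}(b_n))^{-1}$, and by \eqref{conv_c} for the $b_m$-term. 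This pinpoints where you and the paper quietly diverge: your assertion that \eqref{j_2}--\eqref{rem_j3} are ``available only for the limiting operator'' is not how the paper reads its own estimates --- since $C_2=C_2(\delta,p)$ comes from the representation \eqref{theta_repr}, the bound holds uniformly over all drifts in $\mathbf{F}^{1/2}_{\delta_1}$, in particular for each $\Lambda_{C_\infty}(b_n)$. You noticed the uniformity issue (which the paper leaves implicit) and resolved it instead through a difference analog of the weighted estimate \eqref{j_2_w}; your justification --- $b_n-b_m\in\mathbf{F}^{1/2}_{4\delta_1}$ uniformly in $n,m$, and only the multiplier's form-bound, never the operator's, is affected, so no smallness is lost --- is sound, but heavier than necessary: the weight $\rho$ is really needed only in Lemma \ref{thm2}, where the test functions are unbounded, and a uniform-in-$n$ unweighted difference estimate would have sufficed here. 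One last small point: your integrability bound $\mathbb E_x\int_0^t|b|\mathbf 1_K(X(r))\,dr<\infty$ implicitly uses Fatou's lemma along the a.s.\ convergence $b_m(X(r))\to b(X(r))$, i.e.\ Lemma \ref{ae_rem}, exactly as in the paper's step $(\mathbf a)$; this deserves to be said explicitly, though it is not a gap since that lemma precedes the statement.
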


\begin{proof}
Fix $\mu \geq \kappa_d\lambda_\delta$. In what follows, $0<t\leq T<\infty$.

\smallskip

$(\mathbf a)$ $\mathbb E_x \int_0^t \bigl|b\cdot\nabla g \bigr|(X(s))ds<\infty$. Indeed, 
\begin{align*}
& \notag \mathbb E_x \int_0^t \bigl|b\cdot\nabla g \bigr|(X(s))ds  \\
& \notag (\text{we apply Fatou's Lemma, cf.\,Lemma \ref{ae_rem}}) \\
& \notag \leq \liminf_n \mathbb E_x \int_0^t \bigl|b_n\cdot\nabla g \bigr|(X(s))ds  = \liminf_n  \int_0^t e^{-s\Lambda_{C_\infty}(b)}\bigl|b_n\cdot\nabla g \bigr|(x)ds   \\
& =\liminf_n  \int_0^t e^{\mu s} e^{-\mu s }e^{-s\Lambda_{C_\infty}(b)}\bigl|b_n\cdot\nabla g \bigr|(x)ds  \\
& \notag \leq e^{\mu T} \liminf_n (\mu+\Lambda_{C_\infty}(b))^{-1}|b_n| |\nabla g|(x) \\
& \notag \text{(we apply \eqref{j_2} with } h=|\nabla g|) \\
& \notag \leq C_1 e^{\mu T}\liminf_n  \langle |b_n| |\nabla g|^p \rangle^{\frac{1}{p}} \leq C_1 e^{\mu T} 2^\frac{1}{p}\big(\langle |b| |\nabla g|^p \rangle^\frac{1}{p}+ \lim_n \langle |b-b_n| |\nabla g|^p \rangle ^\frac{1}{p}\big) \\
& = C_1 e^{\mu T}2^\frac{1}{p} \langle |b| |\nabla g|^p \rangle^\frac{1}{p} <\infty.
\end{align*}

$(\mathbf b)$ 
$
\mathbb E^n_x[g(X(t))] \rightarrow \mathbb E_x[g(X(t))], 
$
$
\mathbb E_x^n \int_0^t ( b_n\cdot\nabla g)(X(s))ds \rightarrow \mathbb E_x\int_0^t (b\cdot\nabla g)(X(s))ds, 
$
and also, for $h \in C_c^\infty$,
$
\mathbb E^n_x\int_0^t
(|b_n| h)(X(s))ds \rightarrow \mathbb E_x\int_0^t
(|b|h)(X(s))ds
$
as $n \uparrow \infty$.
Indeed, the first  convergence follows from \eqref{conv_c}, the second one follows from $(\mathbf a)$, and the third one from $\mathbb E_x \int_0^t (|b||h|)(X(s))ds<\infty$, a straightforward modification of $(\mathbf a)$.

$(\mathbf c)$ $\mathbb E_x\int_0^t (b_n\cdot\nabla g)(X(s))ds - \mathbb E^n_x\int_0^t (b_n\cdot\nabla g)(X(s))ds\rightarrow 0.$ We have:
\begin{align*}
& \mathbb E_x\int_0^t (b_n\cdot\nabla g)(X(s))ds - \mathbb E^n_x\int_0^t (b_n\cdot\nabla g)(X(s))ds \\
& = \int_0^t \left( e^{-s\Lambda_{C_\infty}(b)} - e^{-s\Lambda_{C_\infty}(b_n)} \right) (b_n\cdot\nabla g)(x)ds \\
& =\int_0^t \left( e^{-s\Lambda_{C_\infty}(b)} - e^{-s\Lambda_{C_\infty}(b_n)} \right) ((b_n-b_m)\cdot\nabla g)(x)ds  \\
& + \int_0^t \left( e^{-s\Lambda_{C_\infty}(b)} - e^{-s\Lambda_{C_\infty}(b_n)} \right) (b_m\cdot\nabla g)(x)ds =: S_1 + S_2,
\end{align*}
where $m$ is to be chosen. 
Arguing as in the proof of $(\mathbf a)$, we obtain:
\begin{align*}
S_1(x) & \leq   
e^{\mu T} (\mu+\Lambda_{C_\infty}(b))^{-1}|(b_n-b_m)\cdot\nabla g|(x)  + e^{\mu T} (\mu+\Lambda_{C_\infty}(b_n))^{-1}|(b_n-b_m)\cdot\nabla g|(x).
\end{align*}
Since $b_n-b_m \rightarrow 0$ in $L^1_{\loc}$ as $n,m \uparrow \infty$, \eqref{rem_j3} yields $S_1 \rightarrow 0$ as $n,m \uparrow \infty$.
Now, fix a sufficiently large $m$. Since $e^{-s\Lambda_{C_\infty}(b)}=s\text{-}C_\infty\text{-}\lim_n e^{-s\Lambda_{C_\infty}(b_n)}$ uniformly in $0 \leq s \leq T$, cf.~\eqref{conv_c}, 
we have $S_2 \rightarrow 0$ as $n \uparrow \infty$. 
The proof of $(\mathbf c)$ is completed.

\medskip

Now we are in position to complete the proof of Lemma \ref{Y_prop}.
Since $b_n \in [C_c^\infty(\mathbb R^d)]^d$, 
$$g(X(t)) - g(x) + \int_0^t (-\Delta g + b_n\cdot\nabla g)(X(s))ds \text{ is a martingale under $\mathbb P^n_x$,}
$$
so the function
$$
x \mapsto \mathbb E^n_x[g(X(t))] - g(x) +\mathbb E^n_x\int_0^t (-\Delta g + b_n\cdot\nabla g)(X(s))ds \quad \text{ is identically zero in } \mathbb R^d.
$$
Thus by $(\mathbf b)$, the function
$$
x \mapsto \mathbb E_x[g(X(t))] - g(x) +\mathbb E_x\int_0^t (-\Delta g + b\cdot\nabla g)(X(s))ds \quad  \text{ is identically zero in } \mathbb R^d,
$$
i.e. $g(X(t)) - g(x) +\int_0^t (-\Delta g + b\cdot\nabla g)(X(s))ds$ is a martingale under $\mathbb P_x$.
\end{proof}

\begin{lemma}
\label{cont_prop}
For $x \in \mathbb R^d$, 
$C([0,\infty[,\mathbb R^d)$ has full $\mathbb P_x$-measure in $D([0,\infty[,\mathbb R^d)$.

\end{lemma}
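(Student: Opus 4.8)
The plan is to deduce continuity of the trajectories of $\mathbb P_x$ from the continuity of the approximating processes $\mathbb P^n_x$ together with a tightness argument. Since $b_n\in[L^\infty\cap C^1]^d$, each $\mathbb P^n_x$ is the law of a genuine solution of the SDE with smooth bounded coefficients and is therefore carried by $C([0,\infty[,\mathbb R^d)$. By \eqref{conv_c} and the Markov property, the finite-dimensional distributions of $\mathbb P^n_x$ converge to those of $\mathbb P_x$. It thus suffices to prove that $\{\mathbb P^n_x\}_n$ is $C$-tight on each $[0,T]$: the weak limit will then be carried by $C([0,T],\mathbb R^d)$, and $T\uparrow\infty$ gives the claim.

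For $C$-tightness I would use Aldous' criterion. Writing $X(t)-X(s)=\sqrt2\,(W(t)-W(s))-\int_s^t b_n(X(u))du$, the Brownian part is controlled uniformly in $n$ by the classical increment moment estimate (the driving noise is the same). For the drift part one reduces, applying the strong Markov property at the relevant stopping times, to controlling
$$\Omega_n(\delta):=\sup_{y}\mathbb E^n_y\int_0^\delta |b_n(X(u))|\,du,$$
and to showing $\limsup_n\Omega_n(\delta)\to0$ as $\delta\downarrow0$. Confinement of the process to compacta, which makes the supremum over $y$ effective, is supplied by the weighted bound \eqref{j_1_w} exactly as in the proof of Lemma \ref{finite_prop}; concretely one works with the $\rho$-weighted analogue of $\Omega_n(\delta)$ and invokes \eqref{j_2_w}.

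The main obstacle is the uniform small-time bound $\limsup_n\Omega_n(\delta)\to0$. The time-integrated estimate \eqref{j_2}, being uniform in $\mu\ge\kappa_d\lambda_\delta$, only yields \emph{boundedness}: from $\mathbb E^n_y\int_0^\delta|b_n|\,du\le e^{\mu\delta}(\mu+\Lambda_{C_\infty}(b_n))^{-1}|b_n|(y)$ and (a localized version of) \eqref{j_2} one gets $\Omega_n(\delta)\le e^{\mu\delta}C_1\||b_n|^{1/p}h\|_p$, with $h$ a cutoff, which does not decay as $\delta\downarrow0$. To extract decay I would sharpen \eqref{j_2} by tracking the $\mu$-dependence in the representation \eqref{theta_repr}: each term there carries a negative power of $(\mu-\Delta)$, whose norm between the relevant spaces tends to $0$ as $\mu\uparrow\infty$, so that one should obtain $\|(\mu+\Lambda_{C_\infty}(b_n))^{-1}|b_n|h\|_\infty\le C_1(\mu)\||b_n|^{1/p}h\|_p$ with $C_1(\mu)\downarrow0$ uniformly in $n$. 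Optimizing $\mu\sim\delta^{-1}$ then yields $\Omega_n(\delta)\le e\,C_1(\delta^{-1})\||b_n|^{1/p}h\|_p\to0$. This is the step that genuinely exploits weak form-boundedness and is the most delicate point; it is also where $m_d\delta<4\frac{d-2}{(d-1)^2}$ enters, through the requirement $p\in I_s$, $p>d-1$.

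Once $\limsup_n\Omega_n(\delta)\to0$ is in hand, a Khasminskii-type estimate upgrades it to an exponential tail for $\int_s^t|b_n(X(u))|\,du$ over short windows, which produces Aldous' condition uniformly in $n$, hence $C$-tightness and the Lemma. An alternative route, working directly with $\mathbb P_x$ and Lemma \ref{Y_prop}, is to verify the classical criterion that a Feller process is a diffusion, namely $\lim_{t\downarrow0}t^{-1}\mathbb E_x[g(X(t))]=0$ for $g\in C_c^\infty$ vanishing near $x$; by Lemma \ref{Y_prop} this equals $\lim_{t\downarrow0}t^{-1}\mathbb E_x\int_0^t(\Delta g-b\cdot\nabla g)(X(s))ds$, in which the $\Delta g$ term vanishes by right-continuity at $0$ (as $\Delta g(x)=0$) and the drift term requires the same small-time control, now assisted by the fact that $\nabla g$ is supported in $\{|y-x|\ge\varepsilon/2\}$ together with the off-diagonal smoothing underlying \eqref{SF}.
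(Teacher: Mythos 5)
Your route (continuity of the approximants plus $C$-tightness and identification of the limit via finite-dimensional distributions, or alternatively a Dynkin-type local-character criterion) is genuinely different from the paper's, and it is not complete: the two steps on which everything hinges are asserted rather than proven, and neither is routine. First, the claimed sharpening of \eqref{j_2} to $C_1(\mu)\downarrow 0$. Your justification --- ``each term there carries a negative power of $(\mu-\Delta)$, whose norm between the relevant spaces tends to $0$'' --- is misleading for this class of drifts: the operators $Q_p(q)$, $G_p(r)$, $T_p$ in \eqref{theta_repr} are built from the scaling-critical quantity $\||b|^{1/2}(\mu-\Delta)^{-1/4}\|_{2\to 2}$, which for a critical field such as $b(x)=c|x|^{-2}x$ is \emph{independent} of $\mu$ (by unitary dilation), so none of these factors decays as $\mu\uparrow\infty$; they are only uniformly bounded. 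The sole source of decay is the single leftmost factor $(\mu-\Delta)^{-\frac12-\frac1{2q}}:L^p\to L^\infty$, with norm $O\bigl(\mu^{-\frac12(1+\frac1q)+\frac{d}{2p}}\bigr)$, the exponent being negative precisely because $p>d-1$. So the sharpened estimate is plausibly true, but proving it requires redoing the derivation of \eqref{j_2} and of the weighted bound \eqref{j_2_w} while tracking this one factor, uniformly in $n$; that is the actual mathematical content of your proof and it is missing. Second, the reduction to $\Omega_n(\delta)$ via the strong Markov property takes a supremum over all positions $y$ the process may occupy, whereas the weighted bounds control only $\rho(y)\cdot(\cdots)(y)$, which degenerates as $|y|\to\infty$. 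You therefore need a compact containment condition uniform in $n$, i.e. $\sup_n\mathbb P^n_x[\sup_{t\le T}|X(t)|>R]\to 0$ as $R\uparrow\infty$. You claim this is supplied ``exactly as in the proof of Lemma \ref{finite_prop}'', but that proof bounds Laplace transforms in time, $\int_0^\infty e^{-\mu t}\mathbb E^n_x[(\mathbf{1}_{\mathbb R^d}-\xi_k)(X(t))]dt$, and a time-integrated bound does not control a supremum over $t\le T$ without an additional maximal-inequality or martingale argument. (The same small-time issue, plus the need for local uniformity in $x$, afflicts your alternative Dynkin-criterion route, which as stated pointwise in $x$ is insufficient.)

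For contrast, the paper avoids all quantitative small-time and uniform-in-$n$ estimates. It works directly on the c\`{a}dl\`{a}g space under $\mathbb P_x$ and uses only the qualitative martingale property of Lemma \ref{Y_prop}: for closed bounded $A$, $B$ with $\dist(A,B)>0$ and $g\in C_c^\infty$ with $g=0$ on $A$, $g=1$ on $B$, the stochastic integral $K^g(t)=\int_0^t\mathbf{1}_A(X(s-))\,dM^g(s)$ is again a martingale and computes to $\sum_{s\le t}\mathbf{1}_A(X(s-))g(X(s))$, whence $\mathbb E_x\bigl[\sum_{s\le t}\mathbf{1}_A(X(s-))\mathbf{1}_B(X(s))\bigr]=0$; varying $A$ and $B$ rules out all jumps. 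If you want to salvage your approach, the two gaps above are exactly what you would have to fill, and filling them would cost considerably more than the paper's one-paragraph argument.
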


\begin{proof} Let $A$, $B$ be arbitrarily bounded closed sets in $\mathbb R^d$, $\dist(A,B)>0$. 
Fix $g \in C_c^\infty(\mathbb R^d)$ such that $g = 0$ on $A$, $g = 1$ on $B$. Set ($X \in D([0,\infty[,\mathbb R^d)$)
$$
M^g(t):=g(X(t)) - g(x) + \int_0^t (-\Delta g + b\cdot\nabla g)(X(s))ds, \quad
K^g(t):=\int_0^t \mathbf{1}_A(X(s-))dM^g(s),
$$
then
\begin{align*}
K^g(t) &=\sum_{s \leq t} \mathbf{1}_A \left(X(s-)\right)g(X(s)) +
\int_0^t \mathbf{1}_A(X(s-))\bigl(-\Delta g + b\cdot\nabla g \bigr)(X(s))ds \\
& =\sum_{s \leq t} \mathbf{1}_A \left(X(s-)\right)g(X(s)).
\end{align*}
By Lemma \ref{Y_prop}, $M^g(t)$ is a martingale, and hence so is $K^g(t)$. Thus, $\mathbb{E}_x\bigl[\sum_{s \leq t} \mathbf{1}_A (X(s-))g(X(s))\bigr]=0.$ Using the Dominated Convergence Theorem, we obtain
$\mathbb{E}_x\bigl[\sum_{s \leq t} \mathbf{1}_A (X(s-))\mathbf{1}_B(X(s))\bigr]=0$. The proof of Lemma \ref{cont_prop} is completed.
\end{proof}

We denote the restriction of $\mathbb P_x$ from $(D([0,\infty[,\mathbb R^d), \mathcal F_\infty')$  to $(C([0,\infty[,\mathbb R^d),\mathcal G_\infty)$ again by $\mathbb P_x$. Lemma \ref{Y_prop} and Lemma \ref{cont_prop} combined yield

\begin{lemma}
\label{thm1}
For every $x \in \mathbb R^d$ and $g \in C_c^\infty(\mathbb R^d)$, 
$$
g(X(t)) - g(x) + \int_0^t (-\Delta g + b\cdot\nabla g)(X(s))ds, \quad X \in C([0,\infty),\mathbb R^d),$$
is a continuous martingale relative to $(C([0,\infty[,\mathbb R^d),\mathcal G_t, \mathbb P_x)$.
\end{lemma}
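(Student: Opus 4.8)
The plan is to derive Lemma \ref{thm1} as an essentially immediate consequence of the two preceding lemmas, Lemma \ref{Y_prop} and Lemma \ref{cont_prop}, together with the reduction to continuous trajectories already established by Lemma \ref{finite_prop}. The statement asserts three things at once: that the process $M^g(t):=g(X(t))-g(x)+\int_0^t(-\Delta g+b\cdot\nabla g)(X(s))\,ds$ is (a) a well-defined integrable process, (b) a martingale, and (c) has continuous paths when viewed on the canonical space $C([0,\infty[,\mathbb R^d)$. I would treat these in turn, since (a) and (b) are supplied by Lemma \ref{Y_prop} and (c) by Lemma \ref{cont_prop}.

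First I would address the passage from the $\sigma$-algebra $\mathcal F'_t$ on $D([0,\infty[,\mathbb R^d)$ to the $\sigma$-algebra $\mathcal G_t$ on $C([0,\infty[,\mathbb R^d)$. By Lemma \ref{cont_prop}, the subspace $C([0,\infty[,\mathbb R^d)$ carries full $\mathbb P_x$-measure inside $D([0,\infty[,\mathbb R^d)$, and (as noted in the text immediately before the lemma) we restrict $\mathbb P_x$ accordingly. The key observation is that on this full-measure set the cylindrical $\sigma$-algebras agree: the restriction of $\mathcal F'_t$ to $C([0,\infty[,\mathbb R^d)$ coincides with $\mathcal G_t$, because both are generated by the coordinate evaluations $X(s)$, $0\le s\le t$, and on continuous trajectories $X(s-)=X(s)$. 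Consequently a process that is $(\mathcal F'_t)$-adapted and integrable, with the defining martingale identity $\mathbb E_x[M^g(t)\mid \mathcal F'_s]=M^g(s)$, restricts to a process that is $(\mathcal G_t)$-adapted and satisfies the same identity with respect to $\mathcal G_s$. The integrability needed here is exactly point $(\mathbf a)$ inside the proof of Lemma \ref{Y_prop}, which gives $\mathbb E_x\int_0^t|b\cdot\nabla g|(X(s))\,ds<\infty$, so $M^g(t)$ is integrable and the martingale conditioning statements make sense.

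Next I would record continuity of the paths. On $C([0,\infty[,\mathbb R^d)$ every trajectory $X$ is continuous, so $t\mapsto g(X(t))$ is continuous since $g\in C_c^\infty$, and $t\mapsto \int_0^t(-\Delta g+b\cdot\nabla g)(X(s))\,ds$ is continuous as an indefinite integral of a ($\mathbb P_x$-a.s.\ locally integrable, again by $(\mathbf a)$) integrand. Hence $M^g(t)$ is $\mathbb P_x$-a.s.\ continuous in $t$. Combining this with the martingale property transferred in the previous step yields that $M^g$ is a continuous martingale relative to $(C([0,\infty[,\mathbb R^d),\mathcal G_t,\mathbb P_x)$, which is the assertion of Lemma \ref{thm1}.

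I do not expect a genuine obstacle here, since the hard analytic work is entirely contained in the earlier lemmas; the only point requiring care is the bookkeeping of the $\sigma$-algebras and the assertion that the martingale property is preserved under restriction to a full-measure, $\sigma$-algebra-generating subspace. The mildly delicate part is verifying that conditioning with respect to $\mathcal G_s$ coincides with conditioning with respect to $\mathcal F'_s$ after restriction, which I would justify by noting that $\mathcal G_s$ is (up to $\mathbb P_x$-null sets) the trace of $\mathcal F'_s$ on the continuous-path subspace and that the conditional expectation is a version-independent object on a full-measure set. Everything else is a direct citation of Lemmas \ref{Y_prop} and \ref{cont_prop}.
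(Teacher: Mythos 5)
Your proof is correct and takes exactly the paper's route: the paper derives Lemma \ref{thm1} with no separate argument at all, simply stating that Lemma \ref{Y_prop} and Lemma \ref{cont_prop} combined yield it after restricting $\mathbb P_x$ to $(C([0,\infty[,\mathbb R^d),\mathcal G_\infty)$, which is precisely what you do. The bookkeeping you make explicit --- that the trace of $\mathcal F'_t$ on the continuous-path subspace is $\mathcal G_t$, that the martingale identity survives restriction to a full-measure subspace, and that integrability and path continuity follow from step $(\mathbf a)$ of Lemma \ref{Y_prop} --- is exactly the justification the paper leaves implicit.
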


\begin{lemma}
\label{thm2}
For every $x \in \mathbb R^d$ and  $t>0$,
$
\mathbb E_x\int_0^t |b(X(s))|ds<\infty,
$ and, for $f(y)=y_i$ or $f(y)=y_iy_j$, $1 \leq i,j \leq d$,
$$
f(X(t)) - f(x) + \int_0^t (-\Delta f + b\cdot\nabla f)(X(s))ds, \quad X \in C([0,\infty[,\mathbb R^d),
$$
is a continuous martingale relative to $(C([0,\infty[,\mathbb R^d),\mathcal G_t, \mathbb P_x)$.
\end{lemma}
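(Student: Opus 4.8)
The plan is to run the scheme of Lemma~\ref{Y_prop}, but since the test functions $f(y)=y_i$ and $f(y)=y_iy_j$ are not compactly supported and their gradients grow at infinity, the crude bounds \eqref{j_2}, \eqref{rem_j3} must be replaced throughout by the weighted estimates \eqref{j_1_w}, \eqref{j_2_w} of Lemma~A. The weight $\rho=\rho_l=(1+l|y|^2)^{-\nu}$ is designed precisely for this: the restriction $\nu>\frac{d}{2p}+1$ guarantees $\rho f\in L^p$ (for $f=y_iy_j$ one has $\rho f=O\big((1+l|y|^2)^{1-\nu}\big)$, whose $L^p$-norm is finite exactly when $\nu>\frac{d}{2p}+1$), while $\rho\,\nabla f\in L^p$ and $\rho\,\Delta f\in L^p$ as well. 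Continuity of the candidate martingale on $C([0,\infty[,\mathbb R^d)$ is then automatic: $s\mapsto f(X(s))$ is continuous and, once the finiteness below is known, $s\mapsto\int_0^s(-\Delta f+b\cdot\nabla f)(X(r))dr$ is absolutely continuous.

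\textbf{Finiteness.} First I would establish $\mathbb E_x\int_0^t|b(X(s))|ds<\infty$, $\mathbb E_x\int_0^t|b\cdot\nabla f|(X(s))ds<\infty$, and $\mathbb E_x|f(X(t))|<\infty$. As in step $(\mathbf a)$ of Lemma~\ref{Y_prop}, Fatou's lemma and Lemma~\ref{ae_rem} reduce each to a uniform-in-$n$ bound under $\mathbb P^n_x$. Passing to the Laplace transform and using the positivity bound $e^{-s\Lambda_{C_\infty}(b)}h\le e^{\mu T}e^{-\mu s}e^{-s\Lambda_{C_\infty}(b)}h$ ($h\ge0$, $s\le T$) converts the time integrals into the resolvent $(\mu+\Lambda_{C_\infty}(b))^{-1}$ evaluated at $x$. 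Approximating the non-compactly-supported right-hand sides by $\xi_r$-truncations and invoking \eqref{j_2_w} (with $h=1$, resp.\ $h=|\nabla f|$) and \eqref{j_1_w} (with $h=|f|$) bounds these by $\rho(x)^{-1}K_2\||b_n|^{1/p}\rho\,h\|_p$, resp.\ $\rho(x)^{-1}K_1\|\rho f\|_p$. Letting $r\uparrow\infty$, the specific form of the mollified truncations $b_n$ in \eqref{b_n} gives the requisite uniform boundedness $\sup_n\||b_n|^{1/p}\rho\,h\|_p<\infty$ against the fixed integrable weight, whence the three assertions.

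\textbf{The martingale identity.} For each $n$ the drift $b_n$ is bounded and $C^1$, so under $\mathbb P^n_x$ the process is an ordinary It\^o diffusion with smooth coefficients and all moments finite; It\^o's formula gives, for every polynomial $f$ and every $\tau\ge0$,
\[
u^n_\tau(y):=\mathbb E^n_y[f(X(\tau))]-f(y)+\mathbb E^n_y\int_0^\tau(-\Delta f+b_n\cdot\nabla f)(X(s))ds=0,\qquad y\in\mathbb R^d.
\]
I would prove the corresponding identity $u_\tau\equiv0$ for $b$ at the level of the Laplace transform, where the weighted estimates apply directly: after integrating $\int_0^\infty e^{-\mu\tau}u^n_\tau(y)\,d\tau$ and using Fubini, all three terms become $(\mu+\Lambda_{C_\infty}(b_n))^{-1}$ acting on $f$, on $\Delta f$ and on $b_n\cdot\nabla f$. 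Each is passed to the limit by the truncate-and-estimate device of steps $(\mathbf b)$--$(\mathbf c)$: split $f=f\xi_k+f(1-\xi_k)$ (and likewise $\nabla f$, $\Delta f$), treat the compactly supported piece by the semigroup convergence \eqref{conv_c}, and control the tail uniformly in $n$ by \eqref{j_1_w}, \eqref{j_2_w}, e.g.\ $\big|(\mu+\Lambda_{C_\infty}(b_n))^{-1}f(1-\xi_k)\big|\le\rho(x)^{-1}K_1\|\rho f(1-\xi_k)\|_p\to0$ as $k\uparrow\infty$. The interchange of $\mathbb P_x$ and $\mathbb P^n_x$ in the drift term is handled, as in $(\mathbf c)$, by inserting an intermediate $b_m$ and using the weighted analogue of \eqref{rem_j3}. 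This gives $\int_0^\infty e^{-\mu\tau}u_\tau(y)\,d\tau=0$ for all $y$ and all $\mu\ge\kappa_d\lambda_\delta$; since $\tau\mapsto u_\tau(y)$ is continuous (semigroup continuity together with the finiteness above), uniqueness of the Laplace transform forces $u_\tau\equiv0$ for every $\tau$, and then the Markov property upgrades this to the martingale property exactly as at the end of Lemma~\ref{Y_prop}.

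\textbf{Main obstacle.} The whole difficulty is the unboundedness of $f$ and the linear growth of $\nabla f$ for $f=y_iy_j$: the estimates used for compactly supported $g$ in Lemma~\ref{Y_prop} diverge here, and everything rests on Lemma~A with the sharp weight $\rho_l$. Two points need care. First, one must verify that the tails $\|\rho f(1-\xi_k)\|_p$ and $\||b_n|^{1/p}\rho\,\nabla f\,(1-\xi_k)\|_p$ tend to $0$ as $k\uparrow\infty$ uniformly in $n$; this is exactly where $\nu>\frac{d}{2p}+1$ is used, ensuring $\rho f,\rho\,\nabla f\in L^p$ and hence the integrability of the truncated tails against the fixed weight. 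Second, the estimates \eqref{j_1_w}, \eqref{j_2_w} are stated for $\Lambda_{C_\infty}(b_n)$; transferring the bounds to the limiting resolvent $(\mu+\Lambda_{C_\infty}(b))^{-1}$ requires one further limit in $n$ via \eqref{conv_c}, which is harmless precisely because the right-hand sides are uniform in $n$.
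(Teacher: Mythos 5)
Your strategy for the time-integrated quantities is essentially the paper's step $(\mathbf a)$: for $h\ge 0$ one dominates $\int_0^t e^{-s\Lambda_{C_\infty}(b_n)}h\,ds\le e^{\mu t}(\mu+\Lambda_{C_\infty}(b_n))^{-1}h$ and then invokes \eqref{j_2_w}, \eqref{j_1_w}. One correction there: the uniform-in-$n$ finiteness of $\langle|b_n|(\rho\varphi)^p\rangle$, $\varphi=|\nabla f|+\alpha|f|$, is not a consequence of ``the specific form of the mollified truncations'' \eqref{b_n}; it comes from the standing assumption that $b_n\in\mathbf F^{1/2}_{\delta_1}$ with $\delta_1$ and $\lambda$ uniform in $n$, applied to the test function $\sqrt{(\rho\varphi)^p}\in W^{1,2}$, which gives $\langle|b_n|(\rho\varphi)^p\rangle^{1/p}\le\delta_1^{1/p}\|(\lambda-\Delta)^{1/4}\sqrt{(\rho\varphi)^p}\|_2^{2/p}<\infty$. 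This quantitative step is how the paper closes the estimate, and it needs to be said.

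The genuine gap is the fixed-time moment bound $\mathbb E_x[|f|(X(t))]<\infty$. You derive it by applying \eqref{j_1_w} with $h=|f|$, i.e.\ by bounding the Laplace transform $\int_0^\infty e^{-\mu s}\mathbb E^n_x[|f|(X(s))]\,ds=(\mu+\Lambda_{C_\infty}(b_n))^{-1}|f|(x)$. But a resolvent bound controls only time-averaged quantities: the domination $\int_0^t e^{-\mu s}e^{-s\Lambda}h\,ds\le(\mu+\Lambda)^{-1}h$ has no analogue for the semigroup at a single time, so finiteness of the Laplace transform yields $\mathbb E_x[|f|(X(s))]<\infty$ only for almost every $s$, not for every $s$. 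This is fatal in two places: a martingale must by definition be integrable at each fixed $t$, and your Laplace-uniqueness argument requires $u_\tau(y)$ to be finite and continuous in $\tau$ for every $\tau$ --- which presupposes exactly the bound in question. The paper closes this gap by a separate device, its step $(\mathbf c)$: apply the already-established martingale identity of Lemma \ref{thm1} to $g_k=\xi_k g$ with $g(y)=1+|y|^2\ge|f|$, so that the fixed-time quantity $\mathbb E_x[g_k(X(t))]$ is rewritten as $g_k(x)$ minus two time-integrated terms, which are bounded uniformly in $k$ by the step $(\mathbf a)$/$(\mathbf b)$ estimates; then $\sup_k\mathbb E_x[g_k(X(t))]<\infty$ and the Monotone Convergence Theorem give $\mathbb E_x[g(X(t))]<\infty$ for every $t$. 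Without this (or some substitute) your scheme does not close; with it, your Laplace-inversion route to the martingale identity would work, but it is heavier than the paper's, which simply lets $k\uparrow\infty$ in the martingales $M^{f_k}$ of Lemma \ref{thm1} via dominated convergence, with no need to return to $b_n$, It\^{o}'s formula, or uniqueness of Laplace transforms.
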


\begin{proof}

Define $f_k:=\xi_k f \in C_c^\infty(\mathbb R^d)$ (see \eqref{xi_k} for the definition of $\xi_k$).
Set $\alpha:=\|\nabla \xi_k\|_\infty$, $\beta:=\|\Delta \xi_k\|_\infty$ ($\alpha, \beta$ don't depend on $k$).
Fix $0<T<\infty$. In what follows, $0 < t \leq T$.

$(\mathbf a) \quad \mathbb E_x\int_0^t (|b| (|\nabla f| + \alpha|f|)) (X(s))ds <\infty.$

 Indeed, set $\varphi:=|\nabla f| + \alpha|f| \in C \cap W^{1,2}_{\loc}$, $\varphi_k:=\xi_{k+1} \varphi \in C_c \cap W^{1,2}$.
First, let us prove that
\[
\mathbb E^n_x\int_0^t (|b_n| \varphi_k) (X(s))ds  \leq \const \text{ independent of } n, k.
\]
Fix $p \in \bigl]d-1,\frac{2}{1-\sqrt{1-m_d\delta}} \bigr[$. Then $\sqrt{(\rho \varphi)^p} \in W^{1,2}$ (recall that $\rho(x):=(1+l|x|^2)^{-\nu}$, $\nu>\frac{d}{2p}+1.)$ We have
\begin{align*}
& \mathbb E^n_x\int_0^t (|b_n| \varphi_k) (X(s))ds=\int_0^t e^{-s\Lambda_{C_\infty}(b_n)}|b_n| \varphi_k(x)ds \\
& \leq e^{\mu T} (\mu+\Lambda_{C_\infty}(b_n))^{-1}|b_n| \varphi_k(x) \\
& (\text{we apply \eqref{j_2_w}} ) \\
& \leq e^{\mu T} \rho(x)^{-1}K_2 \langle |b_n| (\rho \varphi_k)^p \rangle^{\frac{1}{p}}   \leq e^{\mu T} \rho(x)^{-1}K_2\langle |b_n| (\rho \varphi)^p \rangle^{\frac{1}{p}}  \\
& \bigg(\text{we use } b_n \in \mathbf F^{\scriptscriptstyle 1/2}_{\delta_1}, \; m_d \delta_1 < 4 \frac{d-1}{(d-2)^2}\bigg) \\
& \leq e^{\mu T} \rho(x)^{-1}K_2 \delta_1^{\frac{1}{p}} \|(\lambda-\Delta)^\frac{1}{4} \sqrt{(\rho \varphi)^p} \|_2^{\frac{2}{p}}
 <\infty.
\end{align*}
By step $(\mathbf b)$ in the proof of Lemma \ref{Y_prop}, $
\mathbb E^n_x\int_0^t
(|b_n| \varphi_k)(X(s))ds \rightarrow \mathbb E_x\int_0^t
(|b|\varphi_k)(X(s))ds$ as $n \uparrow \infty$.
Therefore, $\mathbb E^n_x\int_0^t (|b_n| \varphi_k) (X(s))ds  \leq C$ implies
$
\mathbb E_x\int_0^t (|b| \varphi_k) (X(s))ds  \leq C \; (C \neq C(k)).
$
Now, Fatou's Lemma yields the required.

\smallskip

$(\mathbf b)$ \quad  For every $t>0$, \; 
$
\mathbb E_x\int_0^t (|\Delta f| + 2 \alpha |\nabla f| + \beta |f|)(X(t)) ds <\infty.
$

The proof is similar to the proof of $(\mathbf a)$ (use \eqref{j_1_w} instead of \eqref{j_2_w}). 

\smallskip

$(\mathbf c)$ \quad For every $t>0$,\;
$
\mathbb E_x[|f|(X(t))]<\infty. 
$

Indeed, set $g(y):=1+|y|^2$, $y \in \mathbb R^d$. Since $|f| \leq g$, it suffices to show that $\mathbb E_x[g(X(t))]<\infty$. Set $g_k(y):=\xi_k(y) g(y)$. By Lemma \ref{thm1},
$$
\mathbb E_x[g_k(X(t))]  = g_k(x) - \mathbb E_x \int_0^t  (-\Delta g_k)(X(s))ds -  \mathbb E_x \int_0^t (b \cdot \nabla g_k)(X(s))ds. 
$$
Note that
$$
\sup_k\mathbb E_x\int_0^t (|b| |g_k|) (X(s))ds <\infty, \quad \sup_k\mathbb E_x\int_0^t |\Delta g_k|(X(s)) ds <\infty
$$
for, arguing as in the proofs of $(\mathbf a)$ and $(\mathbf b)$, we have:
$$
\mathbb E_x\int_0^t (|b| (|\nabla g| + \alpha|g|)) (X(s))ds <\infty, \quad \mathbb E_x\int_0^t (|\Delta g| + 2 \alpha |\nabla g| + \beta |g|)(X(t)) ds <\infty.
$$
Therefore, $\sup_k \mathbb E_x[g_k(X(t))]<\infty$, and so, by the Monotone Convergence Theorem, $\mathbb E_x[g(X(t))]<\infty$. This completes the proof of $(\mathbf{c})$.

\smallskip

Let us complete the proof of Lemma \ref{thm2}. 
By $(\mathbf a)$, $
\mathbb E_x\int_0^t |b(X(s))|ds<\infty$.
By $(\mathbf a)$-$(\mathbf c)$,
$$
M^f(t):=f(X(t)) - f(x) + \int_0^t (-\Delta f + b\cdot\nabla f)(X(s))ds, \quad t>0,
$$
satisfies $\mathbb E_x[|M^{f}(t)|]<\infty$ for all $t>0$.
By Lemma \ref{thm1}, for every $k$, $M^{f_k}(t)$ is a martingale relative to $(C([0,\infty[,\mathbb R^d),\mathcal G_t, \mathbb P_x)$.  By $(\mathbf a)$ and the Dominated Convergence Theorem, since $|\nabla f_k| \leq |\nabla f| + \alpha|f|$ for all $k$, we have
$
\mathbb E_x\int_0^t (b \cdot \nabla f_k) (X(s))ds \rightarrow \mathbb E_x\int_0^t (b \cdot \nabla f) (X(s))ds.
$
By $(\mathbf b)$,
$
\mathbb E_x\int_0^t (-\Delta f_k) (X(s))ds \rightarrow \mathbb E_x\int_0^t (-\Delta f) (X(s))ds.
$
By $(\mathbf c)$,
$
\mathbb E_x[f_k(X(t))] \rightarrow \mathbb E_x[f(X(t))]. 
$
So,
$M^{f}(t)$ is also a martingale on $(C([0,\infty[,\mathbb R^d),\mathcal G_t, \mathbb P_x)$.
The proof of Lemma \ref{thm2} is completed.
\end{proof}

We are in position to complete the proof of Theorem \ref{mainthm}. Lemma \ref{cont_prop} yields (\textit{i}). Lemma \ref{thm2} yields (\textit{ii}). 
By classical results, Lemma \ref{thm2} yields existence of a $d$-dimensional Brownian motion $W(t)$ on $(C([0,\infty[,\mathbb R^d),\mathcal G_t, \mathbb P_x)$ such that
$
X(t) = x - \int_0^t b(X(s))ds + \sqrt{2}W(t)$, $0 \leq t<\infty$, $\text{$\mathbb P_x$ a.s.}
$
$\Rightarrow$ (\textit{iii}). The proof of Theorem \ref{mainthm} is completed.

\section*{Appendix: Proof of Lemma A}

\label{prop1_sect}

The proofs of \eqref{j_1_w} and \eqref{j_2_w} are similar. For instance, let us prove \eqref{j_1_w}.

\smallskip

We will use the bounds:
\begin{equation}
\label{est_prop21}
\|(\mu-\Delta)^{-\frac{1}{2}}|b|^{\frac{1}{p'}}\|_{p \rightarrow p} \leq C_{p,\delta}<\infty, \quad  \||b|^{\frac{1}{p}}(\mu-\Delta)^{-\frac{1}{2}}\|_{p \rightarrow p} \leq C_{p',\delta}<\infty \;\;\; (\text{by duality})
\end{equation}
(for $\|Q_p(q)\|_{p \rightarrow p} \leq C_{p,q,\delta}<\infty$, see section \ref{sect_1}).

\medskip

By the definition of $\rho$,
\begin{equation}
\label{eta_two_est}
\tag{$\star$}
|\nabla \rho| \leq \nu \sqrt{l}\rho \equiv C_1 \sqrt{l}\rho, \quad |\Delta \rho| \leq 2\nu (2\nu + d+2 ) l \rho \equiv C_2 l \rho.
\end{equation}
Set $u=(\mu-\Delta)^{-1}f, \; f \in C_c(\mathbb R^d)$. We have
$
(\mu-\Delta) \rho u = -(\Delta \rho)u - 2\nabla \rho \cdot \nabla u + \rho (\mu-\Delta)u,
$
and so
$$
\rho u = -(\mu-\Delta)^{-1}(\Delta \rho)u - 2(\mu-\Delta)^{-1}\nabla \rho \cdot \nabla u + (\mu-\Delta)^{-1}\rho (\mu-\Delta)u.
$$
 Thus, 
\begin{align}
\rho (\mu-\Delta)^{-1}f  = & -(\mu-\Delta)^{-1}(\Delta \rho) (\mu-\Delta)^{-1}f \tag{$\star\star$} \label{est_1}\\
&- 2(\mu-\Delta)^{-1}\nabla \rho \cdot \nabla  (\mu-\Delta)^{-1}f \notag \\
&+ (\mu-\Delta)^{-1}\rho f \notag.
\end{align}
We obtain from \eqref{est_1}:
\begin{align*}
\rho \nabla (\mu-\Delta)^{-1}f = & -(\nabla \rho) (\mu-\Delta)^{-1}f  \\
&-\nabla(\mu-\Delta)^{-1}(\Delta \rho) (\mu-\Delta)^{-1}f\\
&- 2\nabla(\mu-\Delta)^{-1}\nabla \rho \cdot \nabla  (\mu-\Delta)^{-1}f  \\
& + \nabla(\mu-\Delta)^{-1}\rho f.
\end{align*}
Then
\begin{align*}
I_0 & :=\|\rho(|b_n|^{\frac{1}{p}}+1) \nabla (\mu-\Delta)^{-1}f\|_p \\
& \leq C_1\sqrt{l}\|(|b_n|^{\frac{1}{p}}+1)\rho (\mu-\Delta)^{-1}f\|_p \\
& + C_2 l m_d\|(|b_n|^{\frac{1}{p}}+1)(\kappa_d^{-1}\mu-\Delta)^{-\frac{1}{2}}\rho |(\mu-\Delta)^{-1}f|\|_p \\
& + 2C_1 \sqrt{l} m_d \|(|b_n|^{\frac{1}{p}}+1)(\kappa_d^{-1}\mu-\Delta)^{-\frac{1}{2}}\rho |\nabla (\mu-\Delta)^{-1}f|\|_p \\
& + \|(|b_n|^{\frac{1}{p}}+1) \nabla (\mu-\Delta)^{-1} \rho f\|_p \\
& =: C_1\sqrt{l}I_1 + C_2 l m_d I_2 + 2C_1 \sqrt{l} m_d I_3 + \|(|b_n|^{\frac{1}{p}}+1) \nabla (\mu-\Delta)^{-1} \rho f\|_p.
\end{align*}

We have:
\begin{align*}
& I_3 \leq \|(|b_n|^{\frac{1}{p}}+1)(\kappa_d^{-1}\mu-\Delta)^{-\frac{1}{2}}\|_{p \rightarrow p}\|\rho \nabla (\mu-\Delta)^{-1}f\|_p \\
& (\text{we use \eqref{est_prop21}}) \\
& \leq c \|\rho \nabla (\mu-\Delta)^{-1}f\|_p  \leq c I_0.
\end{align*}
We estimate $I_1$ using \eqref{est_1} and \eqref{eta_two_est}:
\begin{align*}
& I_1 \leq C_2 l \|(|b_n|^{\frac{1}{p}}+1)(\mu-\Delta)^{-1}\rho(\mu-\Delta)^{-1}f\|_p \\
& + 2C_1\sqrt{l} \|(|b_n|^{\frac{1}{p}}+1)(\mu-\Delta)^{-1}\|_{p \rightarrow p} \|\rho \nabla (\mu-\Delta)^{-1}f\|_p \\
& + \|(|b_n|^{\frac{1}{p}}+1)(\mu-\Delta)^{-1}\rho f\|_p,
\end{align*}
and so
$
I_1 \leq C_2 l I_1 + 2C_1\sqrt{l}c I_3 + \|(|b_n|^{\frac{1}{p}}+1)(\mu-\Delta)^{-1}\rho f\|_p.
$

We estimate $I_2$ again using \eqref{eta_two_est} and \eqref{est_1}:
\begin{align*}
& I_2 \leq C_2l \|(|b_n|^{\frac{1}{p}}+1)(\kappa_d^{-1}\mu-\Delta)^{-\frac{1}{2}}(\mu-\Delta)^{-1}\rho |(\mu-\Delta)^{-1}f |\|_p \\
& + 2C_1\sqrt{l}\|(|b_n|^{\frac{1}{p}}+1)(\kappa_d^{-1}\mu-\Delta)^{-\frac{1}{2}}(\mu-\Delta)^{-1}\rho |\nabla(\mu-\Delta)^{-1}f|\|_p \\
& + \|(|b_n|^{\frac{1}{p}}+1)(\kappa_d^{-1}\mu-\Delta)^{-\frac{1}{2}}|(\mu-\Delta)^{-1}\rho f|\|_p,
\end{align*}
and so
$
I_2 \leq C_2 c' l I_1 + 2C_1c' \sqrt{l} I_3 + \|(|b_n|^{\frac{1}{p}}+1)(\kappa_d^{-1}\mu-\Delta)^{-\frac{1}{2}}|(\mu-\Delta)^{-1}\rho f|\|_p.
$

Assembling the above estimates, we conclude that there exists a constant $C>0$ such that, for any $\varepsilon_0>0$, there exists a sufficiently small $l>0$ such that
\begin{align*}
 (1-\varepsilon_0)I_0 & \leq \|(|b_n|^{\frac{1}{p}}+1) \nabla (\mu-\Delta)^{-1} \rho f\|_p \\
& + C\varepsilon_0 \biggl[\bigl\|(|b_n|^{\frac{1}{p}}+1)(\mu-\Delta)^{-1} \rho f\bigr\|_p + \bigl\|(|b_n|^{\frac{1}{p}}+1)(\kappa_d^{-1}\mu-\Delta)^{-\frac{1}{2}}|(\mu-\Delta)^{-1}\rho f|\bigr\|_p \biggr]. 
\end{align*}
Put $f:=|b_n|^{\frac{1}{p'}}h$, $h \in C_c$. Then, using $\|T_p(b_n)\|_{p 
\rightarrow p} \leq m_d c_p\delta$ (cf.\,section \ref{sect_1}), and applying \eqref{est_prop21} to the terms in  brackets $[\;\;]$, we obtain:
 For any $\varepsilon>0$ there exists $l>0$ such that, uniformly in $n$,
\begin{align}
\label{before_T_est}
\big\|\rho (|b_n|^{\frac{1}{p}}+1) \nabla (\mu-\Delta)^{-1}|b_n|^{\frac{1}{p'}}h \big\|_p < (1+\varepsilon) m_d c_p \delta \|\rho h\|_p,
\end{align}
so
\begin{equation}
\label{T_est_weight}
\|\rho\, T_p(b_n) h\|_{p} \leq (1+\varepsilon) m_d c_p \delta \|\rho h\|_p.
\end{equation}
We select $\varepsilon>0$ so that $(1+\varepsilon) m_d c_p \delta<1$. (Recall that $m_d c_p \delta<1$.)

Arguing as in the proof of \eqref{before_T_est} but taking $f:=h$ we find a constant $M_1<\infty$ such that
\begin{equation}
\label{M_est}
\|\rho\, |b_n|^{\frac{1}{p}}\nabla(\mu-\Delta)^{-1} h\|_p \leq M_1\|\rho h\|_p, \quad \text{ uniformly in $n$.}
\end{equation}

Also, we find a constant $M_2<\infty$ such that
\begin{equation}
\label{M_est_0}
\|\rho (\mu-\Delta)^{-1} |b_n|^{\frac{1}{p'}}h\|_\infty \leq M_2\|\rho h\|_p, \quad \text{ uniformly in $n$.}
\end{equation}
Indeed, using \eqref{est_1} with $f:=|b_n|^{\frac{1}{p'}}h$, we obtain
\begin{align*}
\|\rho (\mu-\Delta)^{-1} |b_n|^{\frac{1}{p'}}h\|_\infty & \leq C_2l\|(\mu-\Delta)^{-1}\|_{\infty \rightarrow \infty}\|\rho (\mu-\Delta)^{-1} |b_n|^{\frac{1}{p'}}h\|_\infty \\
&+ 2C_1\sqrt{l}\|(\mu-\Delta)^{-1}\|_{p \rightarrow \infty}\|\rho \nabla (\mu-\Delta)^{-1}|b_n|^{\frac{1}{p'}}h\|_p \\
&+ \|(\mu-\Delta)^{-\frac{1}{2}-\frac{1}{2q}}\|_{p \rightarrow \infty} \|Q_p(q)\rho h\|_p,
\end{align*}
where $\|\rho \nabla (\mu-\Delta)^{-1}|b_n|^{\frac{1}{p'}}h\|_p \leq (1+\varepsilon)m_dc_p\delta\|\rho h\|_p$ by \eqref{before_T_est}, and $\|(\mu-\Delta)^{-\frac{1}{2}-\frac{1}{2q}}\|_{p \rightarrow \infty}<\infty$ because $p>d-1$ and $q$ can be chosen arbitrarily close to $p$. Select $l>0$ so that $C_2l\mu^{-1}<1$. \eqref{M_est_0} follows.

Now, \eqref{theta_repr}  
combined with \eqref{T_est_weight}-\eqref{M_est_0} yields  \eqref{j_1_w}.

\end{document}